\documentclass[9pt]{article}%
\usepackage{bbm}
\usepackage{color}
\usepackage{epsfig}
\usepackage{graphics,graphicx,amssymb,amsmath,verbatim}
\usepackage{amssymb}
\usepackage{mathrsfs}
\usepackage{amsfonts}
\usepackage{amsmath}
\usepackage{graphicx}
\usepackage{easybmat}%
\setcounter{MaxMatrixCols}{30}
\providecommand{\U}[1]{\protect \rule{.1in}{.1in}}
\setlength{\marginparwidth}{0.5in} \setlength{\marginparsep}{0.5in}
\setlength{\oddsidemargin}{-0in} \setlength{\evensidemargin}{-0in}
\setlength{\textwidth}{6.5in} \setlength{\topmargin}{-0.75in}
\setlength{\textheight}{9.5in} \setlength{\parindent}{0in}
\setlength{\parskip}{.1in}
\newtheorem{theorem}{Theorem}

\newtheorem{assumption}{Assumption}

\newtheorem{definition}{Definition}

\newtheorem{lemma}{Lemma}

\newtheorem{proposition}{Proposition}
\newtheorem{remark}{Remark}

\newenvironment{proof}[1][Proof]{\noindent \textbf{#1.} }{\  \rule{0.5em}{0.5em}}

\allowdisplaybreaks[4]
\begin{document}

\title{Stabilization and Consensus of Linear Systems with Multiple Input Delays by
Truncated Pseudo-Predictor Feedback}
\author{Bin Zhou\thanks{Center for Control Theory and Guidance Technology, Harbin
Institute of Technology, Harbin, 150001, China.
Email:\texttt{binzhou@hit.edu.cn, binzhoulee@163.com.}}\quad \quad Shen
Cong\thanks{School of Mechanical \& Electrical Engineering, Heilongjiang
University, Harbin, 150080, P. R. China. Email: \texttt{shen\_tsong@163.com}}}
\date{}
\maketitle

\begin{abstract}
This paper provides an alternative approach referred to as pseudo-predictor
feedback (PPF) for stabilization of linear systems with multiple input delays.
Differently from the traditional predictor feedback which is from the model
reduction appoint of view, the proposed PPF utilizes the idea of prediction by
generalizing the corresponding results for linear systems with a single input
delay to the case of multiple input delays. Since the PPF will generally lead
to distributed controllers, a truncated pseudo-predictor feedback (TPPF)
approach is established instead which gives finite dimensional controllers. It
is shown that the TPPF can compensate arbitrarily large yet bounded delays as
long as the open-loop system is only polynomially unstable. The proposed TPPF
approach is then used to solve the consensus problems for multi-agent systems
characterized by linear systems with multiple input delays. Numerical examples
show the effectiveness of the proposed approach.

\vspace{0.3cm}

\textbf{Keywords:} Truncated pseudo-predictor feedback; Multiple input delays;
Consensus; Multi-agent systems; Stabilization

\end{abstract}

\section{Introduction}

Stability analysis and stabilization of time-delay systems have received
considerable attention during the past several decades since they are
respectively the foundations of analysis and design for this class of systems
\cite{cz11auto,wssc13tc,wssc11tnn}. For stability analysis, there are tools
built in both frequency domain \cite{gck03book,ok14iet} and time domain
\cite{dlsw09ietcta,lxhz12ijcta,souza14ietcta} and they are equivalently powerful in
a sense. For stabilization of time-delay systems, there are also two kinds of
approaches. The first kind of approaches searches for finite dimensional
controllers which lead to an infinite dimensional closed-loop system whose
stability is generally delay-dependent, namely, its stability cannot
guaranteed for arbitrarily large delay in general. The second kind of
approaches is to design an infinite dimensional controller, which may make the
overall closed-loop system behave as a delay-free linear system and thus its
stability is independent of the delay in general. One of the most efficient
methods within the second category of approaches is the predictor feedback
\cite{artstein82tac,krstic10tac,mo79tac,zhou14book,zld12auto}.

Consider a linear system with a pure input delay%
\begin{equation}
\dot{x}\left(  t\right)  =Ax\left(  t\right)  +B_{0}u\left(  t-\tau
_{0}\right)  , \label{sys00}%
\end{equation}
where $A\in \mathbf{R}^{n\times n}$ and $B_{0}\in \mathbf{R}^{n\times m}$ are
constant matrices and $\tau_{0} \geq0$ is a constant scalar denoting the input
delay. To use the predictor feedback approach to stabilize system
(\ref{sys00}), we can design
\begin{equation}
u\left(  t\right)  =Fx\left(  t+\tau_{0}\right)  , \label{eq80}%
\end{equation}
such that the closed-loop system is delay-free, namely,%
\begin{equation}
\dot{x}\left(  t\right)  =\left(  A+B_{0}F_{0}\right)  x, \label{eq81}%
\end{equation}
which is asymptotically stable if $A+B_{0}F_{0}$ is Hurwitz. However,
controller (\ref{eq80}) is acausal as it involves the future state vector
$x\left(  t+\tau_{0}\right)  .$ There are three prediction methods for making
(\ref{eq80}) be causal:

\begin{itemize}
\item[A.] The traditional Predictor Feedback (PF): by the variation of the
constant formula we predict $x\left(  t+\tau_{0}\right)  $ from the
\emph{open-loop} system (\ref{sys00}) to get
\begin{equation}
u\left(  t\right)  =F_{0}\left(  \mathrm{e}^{A\tau_{0}}x\left(  t\right)
+\int_{t}^{t+\tau_{0}}\mathrm{e}^{A\left(  t+\tau_{0}-s\right)  }B_{0}u\left(
s-\tau_{0}\right)  \mathrm{d}s\right)  , \label{pf}%
\end{equation}
which is implementable. However, this controller is infinite dimensional and
may suffice some implementation problems \cite{vdlr99cdc}.

\item[B.] Truncated Predictor Feedback (TPF): to overcome the shortcoming
existing in the PF (\ref{pf}), under the condition that $\left(  A,B\right)  $
is controllable and all the eigenvalues of $A$ are on the imaginary axis, we
drop the distributed term in it to yield%
\begin{equation}
u\left(  t\right)  =F_{0}\mathrm{e}^{A\tau_{0}}x\left(  t\right)  ,
\label{tpf0}%
\end{equation}
which is a finite dimensional controller and will be referred to as TPF. We
have shown in \cite{zld12auto} that the closed-loop system consisting of
(\ref{sys00}) and (\ref{tpf0}) is asymptotically stable for arbitrarily large
delay $\tau_{0}$ if $F_{0}$ is well designed.

\item[C.] Pseudo-predictor Feedback (PPF): differently from the PF (\ref{pf}),
we predict $x\left(  t+\tau_{0}\right)  $ from the \emph{closed-loop} system
(\ref{eq81}) to get%
\begin{equation}
u\left(  t\right)  =F_{0}\mathrm{e}^{\left(  A+B_{0}F_{0}\right)  \tau_{0}%
}x\left(  t\right)  , \label{ppf0}%
\end{equation}
which is also a finite dimensional controller and will be referred to as PPF.
It is shown in \cite{zhou14auto} that the closed-loop system is asymptotically
stable if and only if the following integral delay system is%
\begin{equation}
\rho_{0}\left(  t\right)  =-F_{0}\int_{t-\tau_{0}}^{t}\mathrm{e}^{\left(
A+B_{0}F_{0}\right)  \left(  t-s\right)  }B_{0}\rho_{0}\left(  s\right)
\mathrm{d}s, \label{ids0}%
\end{equation}
which has been studied in \cite{lzl13auto}. Moreover, similar to the TPF
(\ref{tpf0}), if $\left(  A,B\right)  $ is controllable and all the
eigenvalues of $A$ are on the imaginary axis, the closed-loop system
consisting of (\ref{sys00}) and (\ref{ppf0}) is asymptotically stable for
arbitrarily large yet bounded delay $\tau_{0}$ if $F_{0}$ is well designed
\cite{zhou14auto}.
\end{itemize}

The aim of the present paper is to generalize the PPF approach for the linear
system (\ref{sys00}) with a single input delay to the following linear system
with multiple input delays%
\begin{equation}
\dot{x}\left(  t\right)  =Ax\left(  t\right)  +\sum \limits_{i=1}^{p}%
B_{i}u\left(  t-\tau_{i}\right)  , \label{general}%
\end{equation}
where $A\in \mathbf{R}^{n\times n}$ and $B_{i}\in \mathbf{R}^{n\times
m},i=1,2,\ldots,p,$ are constant matrices, and $\tau_{i},i=1,2,\ldots,p$, are
constant scalars. The main contribution and significance of this paper are
listed below:

\begin{enumerate}
\item Differently from the PPF design for linear system (\ref{sys00}) with a
single input delay, the PPF design for the linear system (\ref{general}) with
multiple input delays cannot be accomplished by using a virtual closed-loop
system to predict the future state since the traditional predictor feedback
(also known as model reduction) for system (\ref{general}) does not use the
future state at all. Hence we have to build an alternative traditional
predictor feedback for system (\ref{general}) based on which a reasonable PPF
can be designed. It turns out that the designed PPF can indeed stabilize
system (\ref{general}) as long as an integral delay system in the form of
(\ref{ids0}) is stable, indicating that our PPF design for system
(\ref{general}) is indeed a generalization of the PPF design for system
(\ref{sys00}) to the case of multiple input delays.

\item However, differently from the PPF for system (\ref{sys00}) with a single
input delay, the PPF for system (\ref{general}) with multiple input delays
results in an infinite dimensional controller which can only be implemented by
numerical approximation and may lead to instability to the overall closed-loop
system. To overcome this shortcoming, under the condition that $\left(
A,B\right)  $ is controllable and all the eigenvalues of $A$ are on the
imaginary axis, we provide a truncated pseudo-predictor feedback (TPPF)
approach which leads to a finite dimensional controller that can stabilize
system (\ref{general}) for arbitrarily large yet bounded delays if the
feedback gain is well designed. A numerical comparison of the proposed TPPF
with our early proposed TPF shows that the TPPF can generally lead to much
better control performances.

\item To show the effectiveness of the TPPF design for linear system
(\ref{general}) with multiple input delays, we use it to solve the consensus
problem for multi-agent systems modeled by general high-order linear systems
with multiple input delays. It is shown that as long as the open dynamics is
not exponentially stable and the communication network contains a spanning
tree, the TPPF based protocols can ensure the consensus of the concerned
multi-agent systems for arbitrarily large yet bounded delays.
\end{enumerate}

The reminder of this paper is organized as follows. After formulating the
problem in Section \ref{sec1}, the PPF design and TPPF design are respectively
carried out in Sections \ref{sec2} and \ref{sec3}. Application of the TPPF
design to the consensus of multi-agent systems is then discussed in Section
\ref{sec4}. Two numerical examples are worked out in Section \ref{sec5} to
show the effectiveness of the proposed approaches and Section \ref{sec6}%
\ concludes the paper.

\textbf{Notation:} The notation used in this paper is fairly standard. The
symbol $\lambda \left(  A\right)  $ denotes the eigenvalue set of the square
matrix $A.$ For a positive scalar $\tau,$ let $\mathscr{C}_{n,\tau
}=\mathscr{C}\left(  \left[  -\tau,0\right]  ,\mathbf{R}^{n}\right)  $ denote
the Banach space of continuous vector functions mapping the interval $\left[
-\tau,0\right]  $ into $\mathbf{R}^{n}$ with the topology of uniform
convergence, and let $x_{t}\in \mathscr{C}_{n,\tau}$ denote the restriction of
$x\left(  t\right)  $ to the interval $\left[  t-\tau,t\right]  $ translated
to $\left[  -\tau,0\right]  ,$ that is, $x_{t}\left(  \theta \right)  =x\left(
t+\theta \right)  ,\theta \in \left[  -\tau,0\right]  $. For two matrices $A$ and
$B,$ we use $A\otimes B$ to denote their Kronecker product. For two integers
$p$ and $q$ with $p\leq q,$ the symbol $\mathbf{I}\left[  p,q\right]  $ refers
to the set $\left \{  p,p+1,\cdots,q\right \}  .$ Finally, for a complex number
$s,$ we use $\operatorname{Re}\{s\}$ and $\left \vert s\right \vert $ to denote
respectively its real part and model.

\section{\label{sec1}System Description and Problem Formulation}

Consider the following linear system with three input delays
\begin{equation}
\dot{x}\left(  t\right)  =Ax\left(  t\right)  +B_{1}u\left(  t-\tau
_{1}\right)  +B_{2}u\left(  t-\tau_{2}\right)  +B_{3}u\left(  t-\tau
_{3}\right)  , \label{sys1}%
\end{equation}
where $A\in \mathbf{R}^{n\times n}$ and $B_{i}\in \mathbf{R}^{n\times
m},i=1,2,3,$ are constant matrices, and $\tau_{i},i=1,2,3,$ are constant
scalars and such that
\begin{equation}
0\leq \tau_{1}\leq \tau_{2}\leq \tau_{3}<\infty. \label{delay1}%
\end{equation}
For future use, we denote%
\begin{equation}
\tau=\tau_{1}+\tau_{2}+\tau_{3}. \label{tau}%
\end{equation}
It is well-known that system (\ref{sys1}) is stabilizable if and only if the
following assumption is satisfied \cite{artstein82tac}:

\begin{assumption}
\label{assup1}The matrix pair $\left(  A,B\right)  $ is stabilizable, where%
\begin{equation}
B=\mathrm{e}^{-A\tau_{3}}B_{3}+\mathrm{e}^{-A\tau_{2}}B_{2}+\mathrm{e}%
^{-A\tau_{1}}B_{1}, \label{eqb}%
\end{equation}
namely, there exists a matrix $F\in \mathbf{R}^{m\times n}$ such that $A+BF$ is Hurwitz.
\end{assumption}

In this paper, we are interested in the design of state feedback stabilizing
controller $u\left(  t\right)  $ for system (\ref{sys1}) under Assumption
\ref{assup1}. One of the existing approaches for solving this problem is the
so-called model reduction approach, which is described as follows. Let%
\begin{equation}
z\left(  t\right)  =x\left(  t\right)  +%
{\displaystyle \sum \limits_{i=1}^{3}}
\int_{t-\tau_{i}}^{t}\mathrm{e}^{A\left(  t-\tau_{i}-s\right)  }B_{i}u\left(
s\right)  \mathrm{d}s. \label{zt}%
\end{equation}
Then direct computation gives (see, for example, \cite{artstein82tac} and
\cite{zld10siamjco})%
\begin{equation}
\dot{z}\left(  t\right)  =Az\left(  t\right)  +Bu\left(  t\right)  ,
\label{eq10}%
\end{equation}
which is a delay-free linear system. Then, under Assumption \ref{assup1}, a
stabilizing controller can be designed as%
\begin{align}
u\left(  t\right)   &  =Fz\left(  t\right) \nonumber \\
&  =F\left(  x\left(  t\right)  +%
{\displaystyle \sum \limits_{i=1}^{3}}
\int_{t-\tau_{i}}^{t}\mathrm{e}^{A\left(  t-\tau_{i}-s\right)  }B_{i}u\left(
s\right)  \mathrm{d}s\right)  , \label{eq11}%
\end{align}
which is implementable.

However, as the model reduction controller (\ref{eq11}) involves the
integration of $u\left(  t+s\right)  ,s\leq0,$ it must be implemented via
numerical approximation, which may lead to some stability problems (see, for
example, \cite{vdlr99cdc}). To overcome this problem, we proposed in
\cite{zhou14book} and \cite{zld10siamjco} a truncated version of the controller
(\ref{eq11}), namely,%
\begin{equation}
u\left(  t\right)  =Fx\left(  t\right)  , \label{tpf}%
\end{equation}
which is a finite dimensional controller and is referred to as truncated
predictor feedback (TPF) \cite{zhou14book}, under the following assumption:

\begin{assumption}
\label{ass2}The matrix pair $(A,B)\in \left(  \mathbf{R}^{n\times n}%
\times \mathbf{R}^{n\times m}\right)  $ is controllable with all the
eigenvalues of $A$ being on the imaginary axis.
\end{assumption}

Under this assumption, the following result has been proven in
\cite{zhou14book} and \cite{zld10siamjco}.

\begin{lemma}
\label{lm0}Let $\left(  A,B\right)  $ satisfy Assumption \ref{ass2} and
$F=-B^{\mathrm{T}}P$ where $P$ is the unique positive definite solution to the
following parametric algebraic Riccati equation (ARE) (see Lemma \ref{tppflm1}
given later)%
\begin{equation}
A^{\mathrm{T}}P+PA-PBB^{\mathrm{T}}P=-\gamma P. \label{are}%
\end{equation}
Then there exists a $\gamma^{\ast}>0$ such that the TPF (\ref{tpf}) stabilizes
system (\ref{sys1}) for all $\gamma \in \left(  0,\gamma^{\ast}\right)  .$
\end{lemma}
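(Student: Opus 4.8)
The plan is to show that, when $\gamma$ is small, the closed-loop system is a higher-order perturbation of the delay-free Hurwitz system $\dot z=(A+BF)z$, and then to absorb the perturbation with a Lyapunov--Krasovskii functional, using that the gain $F=-B^{\mathrm T}P(\gamma)$ shrinks to zero as $\gamma\to0^{+}$. Concretely, I would first insert $u(t)=Fx(t)$ into (\ref{sys1}) and pass to the Artstein variable $z(t)$ of (\ref{zt}) (with $u(s)$ replaced by $Fx(s)$). By (\ref{eq10}) this gives $\dot z(t)=Az(t)+BFx(t)$; writing $x(t)=z(t)-\eta(t)$ with
\[
\eta(t)=\sum_{i=1}^{3}\int_{t-\tau_i}^{t}\mathrm{e}^{A(t-\tau_i-s)}B_iFx(s)\,\mathrm{d}s ,
\]
this becomes $\dot z(t)=(A+BF)z(t)-BF\eta(t)=(A-BB^{\mathrm T}P)z(t)+BB^{\mathrm T}P\,\eta(t)$. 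The structural point is that $\eta(t)$ already carries one factor $F=-B^{\mathrm T}P$, so the perturbation $BB^{\mathrm T}P\,\eta(t)$ is \emph{quadratic} in the small gain $B^{\mathrm T}P$, whereas the nominal part obeys, by the ARE (\ref{are}), $(A-BB^{\mathrm T}P)^{\mathrm T}P+P(A-BB^{\mathrm T}P)=-\gamma P-PBB^{\mathrm T}P\le-\gamma P$, which in particular re-proves that $A+BF$ is Hurwitz.

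Next I would take a Lyapunov--Krasovskii functional $V$ consisting of the quadratic form $z^{\mathrm T}Pz$ together with double-integral correction terms of the type $\sum_i\int_{-\tau_i}^{0}\int_{t+\theta}^{t}(\cdot)\,\mathrm{d}s\,\mathrm{d}\theta$ (weighted by $P$ so that the $\gamma$-scaling stays uniform) built to dominate $\eta(t)$. Differentiating along the closed-loop trajectories and using the ARE identity yields a leading term $-\gamma z^{\mathrm T}Pz$; the remaining cross terms generated by $BB^{\mathrm T}P\,\eta(t)$ are handled by Young's inequality together with (i) the uniform kernel bound $\|\mathrm{e}^{A\theta}\|\le c(1+|\theta|)^{n-1}$ for $|\theta|\le\tau$, valid because every eigenvalue of $A$ lies on the imaginary axis — this is where the finiteness of $\tau$ is used — and (ii) the scaling properties of the parametric-ARE solution $P=P(\gamma)$ collected in Lemma \ref{tppflm1} ($P(\gamma)>0$, $\lim_{\gamma\to0^{+}}P(\gamma)=0$, monotonicity in $\gamma$, and the bound on $\mathrm{tr}(B^{\mathrm T}P(\gamma)B)$). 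Together these force the perturbation contribution to $\dot V$ to be of strictly higher order in $\gamma$ than $\gamma z^{\mathrm T}Pz$, so there is $\gamma^{\ast}>0$ with $\dot V$ negative definite for all $\gamma\in(0,\gamma^{\ast})$. Finally, since $z(t)$ is a bounded functional of $x_t$ and, conversely, $x(t)=z(t)-\eta(t)$ is the solution of a Volterra equation whose kernel is a contraction in the sup-norm once $\|F\|$ is small, $z(t)\to0$ forces $x(t)\to0$, and asymptotic stability of (\ref{sys1})--(\ref{tpf}) follows, proving Lemma \ref{lm0}.

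I expect the main obstacle to be step (ii) of the cross-term estimate: verifying that the perturbation is genuinely higher order, i.e. that the product of the polynomial growth of $\mathrm{e}^{A\theta}$ on $[0,\tau]$ with the various $\gamma$-dependent norms of $P(\gamma)$ — in particular the ratio $\|B^{\mathrm T}P(\gamma)\|^{2}/\lambda_{\min}(P(\gamma))$ that appears when one returns from $z^{\mathrm T}Pz$ to $\|z\|^{2}$ — tends to zero faster than $\gamma$. This is exactly the bookkeeping that the parametric-ARE scaling lemma is designed to support, so the argument should turn on applying those estimates carefully rather than on a new idea; Assumption \ref{ass2} enters precisely to guarantee both the polynomial bound on $\mathrm{e}^{At}$ and the existence and favourable scaling of $P(\gamma)$.
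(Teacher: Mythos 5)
The paper does not actually prove Lemma \ref{lm0}; it cites \cite{zhou14book} and \cite{zld10siamjco} for it. The closest in-paper model is the proof of Theorem \ref{th1} for the TPPF, and your plan is essentially that argument transplanted to the TPF: write the closed loop in the Artstein variable as a Hurwitz nominal part plus a perturbation carrying an extra factor of $F$, then close a Lyapunov--Krasovskii estimate using the parametric-ARE scaling. That is the right strategy, and your final step (recovering $x\to0$ from $z\to0$ via the Volterra relation with a small kernel) is also the standard and correct way to finish.

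One concrete warning about the step you yourself flag as the main obstacle. If you estimate the cross terms by passing through the ratio $\Vert B^{\mathrm T}P\Vert^{2}/\lambda_{\min}(P)$, you are implicitly invoking the condition number of $P(\gamma)$, which is \emph{not} controlled as $\gamma\downarrow0$: Lemma \ref{tppflm1} gives $\Vert P\Vert\to0$ but says nothing about $\lambda_{\min}(P)$, which can vanish much faster, so that ratio need not be $o(\gamma)$. The paper's Theorem \ref{th1} proof avoids this by never leaving the $P$-weighted norm: it uses $PBB^{\mathrm T}P\le n\gamma P$ to bound $\Vert Fz\Vert^{2}\le n\gamma\,z^{\mathrm T}Pz$ directly, and weights the double-integral functionals by explicit powers of $\Vert P\Vert$ (as in (\ref{eq45}), (\ref{eq37}), (\ref{eq39})) so that every term in $\dot V$ is compared either with $\gamma\,y^{\mathrm T}Py$ or with $\Vert P\Vert$-weighted $\Vert\rho\Vert^{2}$, the smallness conditions (\ref{eq46}) and (\ref{eq36}) closing the loop. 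With that substitution your bookkeeping goes through. A minor further point: on the bounded interval $[0,\tau]$ the crude bound $\Vert\mathrm e^{A\theta}\Vert\le\mathrm e^{\Vert A\Vert\tau}$ suffices (and is what the paper uses), so the polynomial-growth estimate coming from the spectrum of $A$ is not needed there; Assumption \ref{ass2} enters only through the existence and favourable scaling of $P(\gamma)$.
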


In this paper, we will establish an alternative approach for stabilizing
system (\ref{sys0}) from the prediction point of view and will also build a
class of finite dimensional stabilizing controllers under Assumption
\ref{ass2}. This new approach provides an option for the designer since it may
possess much better control performance than the TPF (\ref{tpf}).

\begin{remark}
For easy presentation, in this paper we have assumed that there are only three
delays in the inputs. However, the reported results can be directly
generalized to the more general system (\ref{general}).
\end{remark}

\section{\label{sec2}Stabilization by Pseudo-Predictor Feedback}

\subsection{A Recursive Derivation of the Model Reduction Approach}

We first consider the following new state vector%
\begin{equation}
y_{3}\left(  t\right)  =\mathrm{e}^{A\tau_{3}}x\left(  t\right)  +\int
_{t}^{t+\tau_{3}}\mathrm{e}^{A\left(  t+\tau_{3}-s\right)  }B_{3}u\left(
s-\tau_{3}\right)  \mathrm{d}s \label{y3}%
\end{equation}
by which and some computation we can rewrite system (\ref{sys1}) as%
\begin{equation}
\dot{y}_{3}\left(  t\right)  =Ay_{3}\left(  t\right)  +B_{3}u\left(  t\right)
+\mathrm{e}^{A\tau_{3}}B_{1}u\left(  t-\tau_{1}\right)  +\mathrm{e}^{A\tau
_{3}}B_{2}u\left(  t-\tau_{2}\right)  . \label{eq2}%
\end{equation}
Now choose another new state vector $y_{2}\left(  t\right)  $ as%
\begin{equation}
y_{2}\left(  t\right)  =\mathrm{e}^{A\tau_{2}}y_{3}\left(  t\right)  +\int
_{t}^{t+\tau_{2}}\mathrm{e}^{A\left(  t+\tau_{2}-s\right)  }\left(
\mathrm{e}^{A\tau_{3}}B_{2}u\left(  s-\tau_{2}\right)  \right)  \mathrm{d}s,
\label{y2}%
\end{equation}
by which system (\ref{eq2}) is transformed into%
\begin{equation}
\dot{y}_{2}\left(  t\right)  =Ay_{2}\left(  t\right)  +\left(  \mathrm{e}%
^{A\tau_{2}}B_{3}+\mathrm{e}^{A\tau_{3}}B_{2}\right)  u\left(  t\right)
+\mathrm{e}^{A\left(  \tau_{2}+\tau_{3}\right)  }B_{1}u\left(  t-\tau
_{1}\right)  . \label{eq3}%
\end{equation}
Finally, we choose the following new state vector $y_{1}\left(  t\right)  :$%
\begin{equation}
y_{1}\left(  t\right)  =\mathrm{e}^{A\tau_{1}}y_{2}\left(  t\right)  +\int
_{t}^{t+\tau_{1}}\mathrm{e}^{A\left(  t+\tau_{1}-s\right)  }\left(
\mathrm{e}^{A\left(  \tau_{2}+\tau_{3}\right)  }B_{1}u\left(  s-\tau
_{1}\right)  \right)  \mathrm{d}s, \label{y1}%
\end{equation}
and it follows from (\ref{eq3}) that%
\begin{equation}
\dot{y}_{1}\left(  t\right)  =Ay_{1}\left(  t\right)  +\mathrm{e}^{A\tau
}\left(  \mathrm{e}^{-A\tau_{3}}B_{3}+\mathrm{e}^{-A\tau_{2}}B_{2}%
+\mathrm{e}^{-A\tau_{1}}B_{1}\right)  u\left(  t\right)  . \label{eq4}%
\end{equation}
Now we consider a state transformation%
\begin{equation}
y\left(  t\right)  =\mathrm{e}^{-A\tau}y_{1}\left(  t\right)  , \label{yt}%
\end{equation}
by which system (\ref{eq4}) can be expressed as%
\begin{equation}
\dot{y}\left(  t\right)  =Ay\left(  t\right)  +Bu\left(  t\right)  ,
\label{sys0}%
\end{equation}
which is also in the form of (\ref{eq10}).

Since (\ref{sys0}) is a delay-free linear system, under Assumption
\ref{assup1}, if we choose%
\begin{equation}
u\left(  t\right)  =Fy\left(  t\right)  , \label{ut}%
\end{equation}
then the closed-loop system is given by%
\begin{equation}
\dot{y}\left(  t\right)  =\left(  A+BF\right)  y\left(  t\right)  ,
\label{closed}%
\end{equation}
which is asymptotically stable if $A+BF$ is Hurwitz.

To write the controller (\ref{ut}) in terms of the original state vector
$x\left(  t\right)  ,$ we present the following lemma.

\begin{lemma}
\label{lm1} Let $y\left(  t\right)  $ be defined through (\ref{y3}),
(\ref{y2}), (\ref{y1}) and (\ref{yt}) and $z\left(  t\right)  $ be defined in
(\ref{zt}). Then%
\begin{equation}
y\left(  t\right)  =z\left(  t\right)  . \label{yz}%
\end{equation}

\end{lemma}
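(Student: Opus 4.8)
The plan is to prove (\ref{yz}) by direct substitution: unwind the nested definitions (\ref{y3}), (\ref{y2}), (\ref{y1}) and (\ref{yt}) from the inside out, normalize each integral by an affine shift of the integration variable, and recognize the outcome as the model reduction state (\ref{zt}). No idea beyond careful bookkeeping of matrix exponents and integration limits is required.

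First I would substitute (\ref{y3}) into (\ref{y2}). The integral in (\ref{y3}) carries $B_{3}u(s-\tau_{3})$ over $[t,t+\tau_{3}]$; after multiplication by the prefactor $\mathrm{e}^{A\tau_{2}}$ and the change of variable $\sigma=s-\tau_{3}$ it becomes an integral of $B_{3}u(\sigma)$ over $[t-\tau_{3},t]$ with weight $\mathrm{e}^{A(\tau_{2}+t-\sigma)}$, while the genuinely new integral in (\ref{y2}), handled by $\sigma=s-\tau_{2}$, contributes $B_{2}u(\sigma)$ over $[t-\tau_{2},t]$ with weight $\mathrm{e}^{A(\tau_{3}+t-\sigma)}$. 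Together with the term $\mathrm{e}^{A(\tau_{2}+\tau_{3})}x(t)$ this produces a closed-form expression for $y_{2}(t)$ in which only $x(t)$ and integrals of $u$ over intervals ending at $t$ appear.

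I would then repeat the identical step to pass to $y_{1}(t)$ via (\ref{y1}): distribute the prefactor $\mathrm{e}^{A\tau_{1}}$ over the two integrals already present and apply $\sigma=s-\tau_{1}$ to the remaining $B_{1}$ integral. Using $\tau-\tau_{i}=\sum_{j\neq i}\tau_{j}$ from (\ref{tau}), the three integral weights line up uniformly as $\mathrm{e}^{A(\tau-\tau_{i}+t-\sigma)}$, so that
\[
y_{1}(t)=\mathrm{e}^{A\tau}x(t)+\sum_{i=1}^{3}\int_{t-\tau_{i}}^{t}\mathrm{e}^{A(\tau-\tau_{i}+t-\sigma)}B_{i}u(\sigma)\,\mathrm{d}\sigma .
\]
Finally, applying (\ref{yt}), i.e.\ multiplying through by $\mathrm{e}^{-A\tau}$, the leading term becomes $x(t)$ and each weight collapses to $\mathrm{e}^{A(t-\tau_{i}-\sigma)}$; the right-hand side is then exactly $x(t)+\sum_{i=1}^{3}\int_{t-\tau_{i}}^{t}\mathrm{e}^{A(t-\tau_{i}-s)}B_{i}u(s)\,\mathrm{d}s=z(t)$, which is (\ref{yz}).

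The computation is routine; the only place demanding attention is tracking the constant shifts inside the matrix exponents — and the matching shifts of the integration limits — through the three nested substitutions. As a consistency check one may note that (\ref{sys0}) and (\ref{eq10}) already show that both $y(t)$ and $z(t)$ satisfy $\dot{w}=Aw+Bu$, so $y-z$ solves $\dot{w}=Aw$; but converting this into the pointwise identity (\ref{yz}) still requires matching the two functions at some instant, which is less transparent than the direct verification, so I would present the substitution argument.
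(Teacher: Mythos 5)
Your proof is correct and follows essentially the same route as the paper's: a direct unwinding of the nested definitions (\ref{y3})--(\ref{yt}) with affine changes of the integration variable, merely organized inside-out rather than outside-in, and your intermediate expression for $y_{1}(t)$ and the final collapse to $z(t)$ check out. No gap.
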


Hence the controller (\ref{ut}) is exactly the model reduction based
controller (\ref{eq11}). However, the aim of the recursive design of the model
reduction described in the above is not to yield the controller (\ref{eq11})
but is to link the new state vector $z\left(  t\right)  $ or $y\left(
t\right)  $ with the future state vector $x\left(  t+\tau \right)  $ and the
future input $u\left(  t+s\right)  ,s>0$, as stated in the following lemma
whose proof is provided in the appendix for clarity.

\begin{lemma}
\label{lm2} Let $y\left(  t\right)  $ be defined through (\ref{y3}),
(\ref{y2}), (\ref{y1}) and (\ref{yt}). Then%
\begin{equation}
y\left(  t\right)  =\mathrm{e}^{-A\tau}x\left(  t+\tau \right)  -%
{\displaystyle \sum \limits_{i=1}^{3}}
\int_{t}^{t+\tau-\tau_{i}}\mathrm{e}^{A\left(  t-s-\tau_{i}\right)  }%
B_{i}u\left(  s\right)  \mathrm{d}s. \label{yx}%
\end{equation}

\end{lemma}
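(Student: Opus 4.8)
The plan is to unwind the four definitions \eqref{y3}, \eqref{y2}, \eqref{y1}, \eqref{yt} in sequence, at each stage rewriting the ``new state plus integral correction'' as a shifted future state minus the integrals that the correction fails to account for. First I would rewrite \eqref{y3} by recognizing $y_3(t)$ as exactly the Artstein predictor for the $B_3$-delay alone: applying the variation-of-constants formula to \eqref{sys1} over $[t,t+\tau_3]$ gives
\begin{equation}
x(t+\tau_3)=\mathrm{e}^{A\tau_3}x(t)+\int_t^{t+\tau_3}\mathrm{e}^{A(t+\tau_3-s)}\Bigl(B_1u(s-\tau_1)+B_2u(s-\tau_2)+B_3u(s-\tau_3)\Bigr)\mathrm{d}s,
\end{equation}
so that
\begin{equation}
y_3(t)=x(t+\tau_3)-\int_t^{t+\tau_3}\mathrm{e}^{A(t+\tau_3-s)}\bigl(B_1u(s-\tau_1)+B_2u(s-\tau_2)\bigr)\mathrm{d}s.
\end{equation}
After a change of variable $s\mapsto s+\tau_i$ in each leftover integral this becomes $x(t+\tau_3)-\sum_{i=1}^{2}\int_{t-\tau_i}^{t+\tau_3-\tau_i}\mathrm{e}^{A(t+\tau_3-\tau_i-s)}B_iu(s)\,\mathrm{d}s$.

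Next I would push this through \eqref{y2}: multiply the expression for $y_3$ by $\mathrm{e}^{A\tau_2}$, add the correction integral in \eqref{y2}, and use the variation-of-constants formula for \eqref{eq2} (or equivalently re-apply the same manipulation to the $B_3,B_1,B_2$ terms appearing in \eqref{eq2}) to replace $\mathrm{e}^{A\tau_2}x(t+\tau_3)$-type terms by $x(t+\tau_2+\tau_3)$ up to the integrals that the $\mathrm{e}^{A\tau_2}$-correction does not kill. The key bookkeeping observation, which I would verify by a change-of-variables argument, is that the correction term added at each stage is precisely designed to convert the $\mathrm{e}^{A\tau_j}$-premultiplied predecessor into a genuine prediction over the longer horizon while leaving exactly one delayed-input integral per not-yet-processed channel. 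Iterating once more through \eqref{y1} produces $y_1(t)=x(t+\tau)-\sum_{i=1}^{3}\int_{t}^{t+\tau-\tau_i}\mathrm{e}^{A(t+\tau-\tau_i-s)}B_iu(s)\,\mathrm{d}s$, and finally multiplying by $\mathrm{e}^{-A\tau}$ as in \eqref{yt} and absorbing the factor into each exponential under the integral yields exactly \eqref{yx}.

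An alternative, arguably cleaner route is to differentiate \eqref{yx}: one checks that the right-hand side of \eqref{yx}, call it $w(t)$, satisfies $\dot w(t)=Aw(t)+Bu(t)$ by differentiating the integral terms (the boundary terms at $s=t$ and $s=t+\tau-\tau_i$ telescope against $\mathrm{e}^{-A\tau}\dot x(t+\tau)$ using \eqref{sys1} and \eqref{eqb}), which is the same ODE \eqref{sys0} that $y(t)$ satisfies by construction; then one matches an initial condition. I expect this second approach to be shorter but to require care at $t=0$ in identifying the initial segment, whereas the first approach is purely algebraic. The main obstacle in either case is the index bookkeeping in the changes of variables --- keeping track of which exponential weight and which integration limits attach to each $B_i$ as the horizon grows from $\tau_3$ to $\tau_2+\tau_3$ to $\tau$ --- but this is routine once the pattern from the $y_3$ step is established, and the ordering constraint \eqref{delay1} is not actually needed for the identity itself.
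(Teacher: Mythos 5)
Your first (primary) route is exactly the paper's proof: invert each of \eqref{y3}, \eqref{y2}, \eqref{y1} via the variation-of-constants formula applied to \eqref{sys1}, \eqref{eq2}, \eqref{eq3} respectively, so that each $y_j(t)$ equals a shifted future state minus the not-yet-compensated input integrals, then substitute recursively, change variables, and multiply by $\mathrm{e}^{-A\tau}$; the intermediate expressions and the final bookkeeping of limits all match. The proposal is correct and takes essentially the same approach as the paper.
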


The connection revealed in the above lemma helps us to derive a predictor
based approach for stabilization of system (\ref{sys0}) in the next subsection.

\subsection{Pseudo-Predictor Feedback}

Since the right-hand side of (\ref{yx}) is acausal, we consider%
\begin{align}
y\left(  t-\tau \right)   &  =\mathrm{e}^{-A\tau}x\left(  t\right)  -%
{\displaystyle \sum \limits_{i=1}^{3}}
\int_{t-\tau}^{t-\tau_{i}}\mathrm{e}^{A\left(  t-\tau-s-\tau_{i}\right)
}B_{i}u\left(  s\right)  \mathrm{d}s\nonumber \\
&  =\mathrm{e}^{-A\tau}\left(  x\left(  t\right)  -%
{\displaystyle \sum \limits_{i=1}^{3}}
\int_{t-\tau}^{t-\tau_{i}}\mathrm{e}^{A\left(  t-s-\tau_{i}\right)  }%
B_{i}u\left(  s\right)  \mathrm{d}s\right) \nonumber \\
&  \triangleq \mathrm{e}^{-A\tau}\left(  x\left(  t\right)  -\varphi \left(
t\right)  \right)  , \label{eq6}%
\end{align}
and use the closed-loop system model (\ref{closed}) to yield%
\begin{align}
u\left(  t\right)   &  =Fy\left(  t\right) \nonumber \\
&  =F\mathrm{e}^{\left(  A+BF\right)  \tau}y\left(  t-\tau \right) \nonumber \\
&  =F\mathrm{e}^{\left(  A+BF\right)  \tau}\mathrm{e}^{-A\tau}\left(  x\left(
t\right)  -\varphi \left(  t\right)  \right)  , \label{ppf}%
\end{align}
which will be referred to as pseudo-predictor feedback (PPF) since we have
used the virtual closed-loop system (\ref{closed}) to predict $y\left(
t\right)  $ from $y\left(  t-\tau \right)  $ \cite{zhou14auto}. The controller
(\ref{ppf}) is now causal and is thus implementable.

Regarding the stability of the closed-loop system under the PPF (\ref{ppf}),
we have the following result.

\begin{proposition}
\label{pp0}The closed-loop system consisting of (\ref{sys00}) and the PPF
(\ref{ppf}) is asymptotically stable if and only if the following integral
delay system is%
\begin{equation}
\rho \left(  t\right)  =-F\int_{t-\tau}^{t}\mathrm{e}^{\left(  A+BF\right)
\left(  t-s\right)  }B\rho \left(  s\right)  \mathrm{d}s. \label{ids}%
\end{equation}

\end{proposition}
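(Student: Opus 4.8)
The plan is to reduce the closed-loop system to a cascade driven by one copy of the integral delay system (\ref{ids}). Throughout, $F$ is the gain appearing in (\ref{ppf}), chosen under Assumption \ref{assup1} so that $A+BF$ is Hurwitz. First I would reintroduce the vector $y(t)$ of Lemma \ref{lm2}: by Lemma \ref{lm1} it equals $z(t)$, so it obeys the delay-free identity $\dot{y}(t)=Ay(t)+Bu(t)$, by (\ref{eq6}) the PPF (\ref{ppf}) is simply $u(t)=F\mathrm{e}^{(A+BF)\tau}y(t-\tau)$, and (\ref{eq6}) also yields the reconstruction $x(t)=\mathrm{e}^{A\tau}y(t-\tau)+\varphi(t)$. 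Now set $\rho(t)=u(t)-Fy(t)$. Inserting $u=Fy+\rho$ into $\dot y=Ay+Bu$ gives $\dot y(t)=(A+BF)y(t)+B\rho(t)$, hence by variation of constants over $[t-\tau,t]$ one has $y(t)=\mathrm{e}^{(A+BF)\tau}y(t-\tau)+\int_{t-\tau}^{t}\mathrm{e}^{(A+BF)(t-s)}B\rho(s)\,\mathrm{d}s$; multiplying on the left by $F$ and using $F\mathrm{e}^{(A+BF)\tau}y(t-\tau)=u(t)$ together with $Fy(t)=u(t)-\rho(t)$ collapses this to exactly (\ref{ids}). Thus along every closed-loop solution the error $\rho$ satisfies (\ref{ids}), $y$ is produced from $\rho$ through the Hurwitz matrix $A+BF$, and $x$ is recovered from $y$ and $u$.

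Running the computation in reverse shows the correspondence is two-way: if $\rho$ is \emph{any} solution of (\ref{ids}) and $y$ is obtained by integrating $\dot y=(A+BF)y+B\rho$ forward from an arbitrary initial value, then $u:=Fy+\rho$ automatically satisfies $u(t)=F\mathrm{e}^{(A+BF)\tau}y(t-\tau)$ and $\dot y=Ay+Bu$, and inverting $y=z$ and reversing the algebra behind (\ref{eq10}) turns this into a bona fide closed-loop trajectory of (\ref{sys1})--(\ref{ppf}); closed-loop trajectories and solutions of (\ref{ids}) therefore match up to the finite-dimensional choice of $y(t_{0}-\tau)$. For the ``if'' part, suppose (\ref{ids}) is asymptotically stable; being linear and autonomous it is then exponentially stable, so $\rho_{t}\to0$ with an overshoot bounded linearly by the initial data. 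The cascade propagates the decay upward: $\dot y=(A+BF)y+B\rho$ with $A+BF$ Hurwitz forces $y(t)\to0$, whence $u=Fy+\rho\to0$, whence $\varphi(t)\to0$ (it integrates $u$ over windows of fixed length), whence $x(t)=\mathrm{e}^{A\tau}y(t-\tau)+\varphi(t)\to0$; carrying the linear overshoot estimates through each step gives Lyapunov stability, so the closed loop is asymptotically stable.

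For the ``only if'' part I would argue by contraposition: if (\ref{ids}) is not asymptotically stable it possesses a solution $\rho$ with $\rho(t)\not\to0$; extending it to a closed-loop trajectory as above, with $u=Fy+\rho$, asymptotic stability of the closed loop would force $y(t)\to0$ and $u(t)\to0$, hence $\rho=u-Fy\to0$, a contradiction. The step I expect to require the most care is the stability bookkeeping rather than any calculation: one must fix the state spaces of the closed-loop functional differential equation and of (\ref{ids}), verify that the correspondence above is a bounded linear map between them, and invoke the standard facts (as used for the single-delay case in \cite{lzl13auto,zhou14auto}) that for these linear autonomous systems asymptotic stability coincides with exponential stability and with the spectrum lying strictly in the open left half plane; the remaining point is to confirm that the $x$ reconstructed from $(y,u)$ genuinely solves (\ref{sys1}), i.e. that the chain of transformations (\ref{y3})--(\ref{yt}) behind Lemmas \ref{lm1} and \ref{lm2} is invertible, while the variation-of-constants identity and the overshoot estimates themselves are routine.
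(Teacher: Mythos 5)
Your proposal is correct and follows essentially the same route as the paper: both define $\rho(t)=F\bigl(\mathrm{e}^{(A+BF)\tau}y(t-\tau)-y(t)\bigr)$ (your $u-Fy$ is the same quantity), rewrite the closed loop as $\dot y=(A+BF)y+B\rho$, and use variation of constants to show $\rho$ obeys (\ref{ids}), then propagate decay through the cascade $\rho\to y\to u\to\varphi\to x$. The only difference is that you spell out the converse direction (via the two-way correspondence and contraposition), which the paper omits by deferring to the proof of Theorem 1 in \cite{zhou14auto}; your sketch of that step is consistent with the cited argument.
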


\begin{proof}
By (\ref{ppf}) and (\ref{sys0}), the closed-lop system is given by%
\begin{align}
\dot{y}\left(  t\right)   &  =Ay\left(  t\right)  +BF\mathrm{e}^{\left(
A+BF\right)  \tau}\mathrm{e}^{-A\tau}\left(  x\left(  t\right)  -\varphi
\left(  t\right)  \right) \nonumber \\
&  =Ay\left(  t\right)  +BF\mathrm{e}^{\left(  A+BF\right)  \tau}y\left(
t-\tau \right) \nonumber \\
&  =\left(  A+BF\right)  y\left(  t\right)  +B\rho \left(  t\right)  ,
\label{eq7}%
\end{align}
where%
\begin{equation}
\rho \left(  t\right)  =F\left(  \mathrm{e}^{\left(  A+BF\right)  \tau}y\left(
t-\tau \right)  -y\left(  t\right)  \right)  . \label{rt}%
\end{equation}
On the other hand, it follows from (\ref{eq7}) that%
\begin{equation}
y\left(  t\right)  =\mathrm{e}^{\left(  A+BF\right)  \tau}y\left(
t-\tau \right)  +\int_{t-\tau}^{t}\mathrm{e}^{\left(  A+BF\right)  \left(
t-s\right)  }B\rho \left(  s\right)  \mathrm{d}s, \label{eq18}%
\end{equation}
substituting of which into (\ref{eq7}) gives the integral delay system
(\ref{ids}). Hence if (\ref{ids}) is asymptotically stable, it follows from
(\ref{eq7}) that the closed-loop system is stable. The converse can be shown
similarly as the proof of Theorem 1 in \cite{zhou14auto} and is omitted for brevity.
\end{proof}

\begin{remark}
The PPF (\ref{ppf}) will reduce to the PPF in \cite{zhou14auto} if there is
only one delay in the system, namely, $B_{1}=B_{2}=0$ and $\tau_{1}=\tau
_{2}=0.$ In fact, in this case, (\ref{ppf}) reduces to%
\begin{align}
u\left(  t\right)   &  =F\mathrm{e}^{\left(  A+BF\right)  \tau_{3}}%
\mathrm{e}^{-A\tau_{3}}x\left(  t\right) \nonumber \\
&  =F\mathrm{e}^{\left(  A+\mathrm{e}^{-A\tau_{3}}B_{3}F\right)  \tau_{3}%
}\mathrm{e}^{-A\tau_{3}}x\left(  t\right) \nonumber \\
&  =F\mathrm{e}^{\mathrm{e}^{-A\tau_{3}}\left[  \left(  A+B_{3}F\mathrm{e}%
^{-A\tau_{3}}\right)  \tau_{3}\right]  \mathrm{e}^{A\tau_{3}}}\mathrm{e}%
^{-A\tau_{3}}x\left(  t\right) \nonumber \\
&  =F\mathrm{e}^{-A\tau_{3}}\mathrm{e}^{\left(  A+B_{3}F\mathrm{e}^{-A\tau
_{3}}\right)  \tau_{3}}\mathrm{e}^{A\tau_{3}}\mathrm{e}^{-A\tau_{3}}x\left(
t\right) \nonumber \\
&  =F_{3}\mathrm{e}^{\left(  A+B_{3}F_{3}\right)  \tau_{3}}x\left(  t\right)
, \label{ppf1}%
\end{align}
where $F_{3}=F\mathrm{e}^{-A\tau_{3}}$ and is such that $A+B_{3}F_{3}$ is
Hurwitz. This controller (\ref{ppf1}) is exactly in the form of the PPF
(\ref{ppf0}) established in \cite{zhou14auto}. On the other hand, the integral
delay system (\ref{ids}) is exactly in the form of (\ref{ids0}). Hence
Proposition \ref{pp0}\ can be naturally considered a generalization of the
results for the PPF established in \cite{zhou14auto} to the case of multiple
input delays.
\end{remark}

Differently from the PPF (\ref{ppf0}) for the linear system (\ref{sys00}) with
a single input delay, the PPF (\ref{ppf}) for system (\ref{sys1}) with
multiple input delays also involves the integration of $u\left(  t\right)  $
in the past time and is thus an infinite dimensional controller which may
suffice some implementation problems. To overcome this shortcoming, we will
establish a truncated version of this class of controller in the next section.

\section{\label{sec3}Truncated Pseudo-Predictor Feedback}

\subsection{Derivation of the Truncated Pseudo-Predictor Feedback}

Similarly to the development in \cite{zld12auto}, let the feedback gain $F$ be
parameterized as $F=F\left(  \gamma \right)  :\mathbf{R}\rightarrow
\mathbf{R}^{m\times n}$ and such that%
\begin{equation}
\lim_{\gamma \downarrow0}\frac{1}{\gamma}\left \Vert F\left(  \gamma \right)
\right \Vert <\infty,\label{eq50}%
\end{equation}
then $u\left(  t\right)  $ is also \textquotedblleft of order
1\textquotedblright \ with respect to $\gamma$. Consequently, the vector
$\varphi \left(  t\right)  $ is \textquotedblleft of order 1\textquotedblright%
\ with respect to $\gamma.$ As a result, $F\mathrm{e}^{\left(  A+BF\right)
\tau}\mathrm{e}^{-A\tau}\varphi \left(  t\right)  $ is at least
\textquotedblleft of order 2\textquotedblright \ with respect to $\gamma$.
Hence, the term $F\mathrm{e}^{\left(  A+BF\right)  \tau}\mathrm{e}^{-A\tau
}\varphi \left(  t\right)  $ is dominated by the term $F\mathrm{e}^{\left(
A+BF\right)  \tau}\mathrm{e}^{-A\tau}x\left(  t\right)  $ and thus might be
safely neglected in $u\left(  t\right)  $ when $\gamma$ is sufficiently small
\cite{zld12auto}. As a result, the PPF (\ref{ppf}) can be truncated as%
\begin{equation}
u\left(  t\right)  =F\mathrm{e}^{\left(  A+BF\right)  \tau}\mathrm{e}^{-A\tau
}x\left(  t\right)  ,\label{tppf}%
\end{equation}
which we refer to as the \textquotedblleft truncated pseudo-predictor feedback
(TPPF).\textquotedblright \ The main advantage of the TPPF (\ref{tppf}) over
the PPF (\ref{ppf}) is that the former one is finite dimensional and is easy
to implement.

According to the discussion in \cite{zld12auto}, to ensure that there exists a
stabilizing $F$ satisfying (\ref{eq50}), we should assume that system
(\ref{sys1}) satisfies Assumption \ref{ass2}. In this case, we have the
following lemma which is helpful for proving our main results.

\begin{lemma}
\label{tppflm1}\cite{zhou14auto} Assume that $\left(  A,B\right)  $ satisfies
Assumption \ref{ass2}. Then the ARE (\ref{are}) admits a unique positive
definite solution $P\left(  \gamma \right)  =W^{-1}\left(  \gamma \right)  $ for
all $\gamma>0$, where $W\left(  \gamma \right)  $ is the unique positive
definite solution to the Lyapunov equation
\begin{equation}
W\left(  A+\frac{\gamma}{2}I_{n}\right)  ^{\mathrm{T}}+\left(  A+\frac{\gamma
}{2}I_{n}\right)  W=BB^{\mathrm{T}}. \label{ppfeq34}%
\end{equation}
Moreover, $A+BF$ is Hurwitz, where $F=-B^{\mathrm{T}}P$, $\mathrm{tr}\left(
B^{\mathrm{T}}PB\right)  =n\gamma,PBB^{\mathrm{T}}P\leq n\gamma P\left(
\gamma \right)  ,\lim_{\gamma \downarrow0}P\left(  \gamma \right)  =0,$ and
\begin{equation}
P\mathrm{e}^{\left(  A+BF\right)  }=\mathrm{e}^{-\left(  A+\gamma
I_{n}\right)  ^{\mathrm{T}}}P. \label{eq29}%
\end{equation}

\end{lemma}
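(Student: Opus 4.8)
The plan is to reduce the parametric ARE (\ref{are}) to a Lyapunov equation by a completion-of-squares trick. First I would rewrite (\ref{are}) as $(A+\frac{\gamma}{2}I_n)^{\mathrm{T}}P + P(A+\frac{\gamma}{2}I_n) = PBB^{\mathrm{T}}P$, and then, assuming for the moment that the solution $P$ is invertible, multiply on the left and right by $W:=P^{-1}$ to obtain exactly the Lyapunov equation (\ref{ppfeq34}). So the existence and uniqueness of a positive definite $P$ is equivalent to existence and uniqueness of a positive definite $W$ solving (\ref{ppfeq34}). For the latter, I would invoke the standard Lyapunov-theory fact: the equation $W M^{\mathrm{T}} + M W = N$ with $N = BB^{\mathrm{T}}\succeq 0$ has a unique solution $W\succ0$ provided $M = A + \frac{\gamma}{2}I_n$ is anti-Hurwitz (all eigenvalues in the open right half plane) and $(M, B)$ — equivalently $(-M,B)$ — is controllable. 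Here Assumption \ref{ass2} gives that $(A,B)$ is controllable and that $A$ has all eigenvalues on the imaginary axis, so for every $\gamma>0$ the matrix $A+\frac{\gamma}{2}I_n$ has all eigenvalues with real part exactly $\gamma/2>0$, hence is anti-Hurwitz, and controllability of $(A+\frac{\gamma}{2}I_n,B)$ is inherited from that of $(A,B)$. This yields the unique $W(\gamma)\succ0$, and then $P(\gamma):=W^{-1}(\gamma)\succ0$; running the algebra backwards confirms $P$ solves (\ref{are}).

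Next I would establish the auxiliary identities. That $A+BF$ with $F=-B^{\mathrm{T}}P$ is Hurwitz is the familiar consequence of the ARE being a bona fide control Riccati equation: rewriting (\ref{are}) as $(A+BF)^{\mathrm{T}}P + P(A+BF) = -\gamma P - PBB^{\mathrm{T}}P \preceq -\gamma P \prec 0$, so $P$ is a Lyapunov certificate for $A+BF$, giving Hurwitz stability (strictness of the inequality plus $P\succ0$). For the trace identity, take the trace of the Lyapunov equation (\ref{ppfeq34}): $\mathrm{tr}(W(A+\frac{\gamma}{2}I_n)^{\mathrm{T}}) + \mathrm{tr}((A+\frac{\gamma}{2}I_n)W) = 2\,\mathrm{tr}((A+\frac{\gamma}{2}I_n)W) = \gamma\,\mathrm{tr}(W) + 2\,\mathrm{tr}(AW)$; but $\mathrm{tr}(AW)=0$ follows from $\lambda(A)\subset\mathbf{i}\mathbf{R}$ after a suitable symmetrization argument (or directly: $\mathrm{tr}(AW) = \frac12\mathrm{tr}(AW + WA^{\mathrm{T}}) = \frac12\mathrm{tr}(BB^{\mathrm{T}} - \gamma W - \text{lower order})$ — more cleanly, use that $A$ plus its transpose-conjugate contributes nothing, which is exactly the place the imaginary-axis hypothesis is used). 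This gives $\mathrm{tr}(BB^{\mathrm{T}}) = \gamma\,\mathrm{tr}(W)$, and since $\mathrm{tr}(B^{\mathrm{T}}PB) = \mathrm{tr}(B^{\mathrm{T}}W^{-1}B)$ — hmm, this needs the alternative form; I would instead note $\mathrm{tr}(B^{\mathrm{T}}PB) = n\gamma$ comes from pre/post-multiplying (\ref{are}) by appropriate factors or from the known scaling $W(\gamma) = \gamma^{-1}W(1)$-type homogeneity, which I would derive by checking that $W(\gamma)$ and $\gamma$ scale compatibly in (\ref{ppfeq34}). The inequality $PBB^{\mathrm{T}}P \le n\gamma P$ then follows from $B^{\mathrm{T}}PB \le \mathrm{tr}(B^{\mathrm{T}}PB)I_m = n\gamma I_m$, squeezing with $P$ on both sides. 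The limit $\lim_{\gamma\downarrow0}P(\gamma)=0$ follows from $\mathrm{tr}(P) = \mathrm{tr}(W^{-1})\to\infty$ being the wrong direction — rather, $0 \prec P(\gamma) \preceq$ (bound via the trace identity $\mathrm{tr}(B^{\mathrm{T}}PB)=n\gamma\to0$ plus controllability forcing $P$ to zero along the whole cone).

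Finally, for the commutation identity (\ref{eq29}), I would differentiate or use the series expansion: from $PA + A^{\mathrm{T}}P - PBB^{\mathrm{T}}P = -\gamma P$ we get $P(A+BF) = P(A - BB^{\mathrm{T}}P) = -A^{\mathrm{T}}P - \gamma P + PBB^{\mathrm{T}}P - PBB^{\mathrm{T}}P$ — wait, more carefully $P(A+BF) = PA - PBB^{\mathrm{T}}P = -A^{\mathrm{T}}P - \gamma P$ by the ARE, i.e. $P(A+BF) = -(A+\gamma I_n)^{\mathrm{T}}P$. This intertwining relation $PM_1 = M_2 P$ with $M_1 = A+BF$, $M_2 = -(A+\gamma I_n)^{\mathrm{T}}$ immediately exponentiates to $P\mathrm{e}^{M_1} = \mathrm{e}^{M_2}P$, which is (\ref{eq29}). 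The main obstacle I anticipate is pinning down the trace and norm bounds with the right constants — specifically, making precise how the imaginary-axis spectrum of $A$ forces $\mathrm{tr}(AW)=0$ (this is where one genuinely needs $\lambda(A)\subset\mathbf{i}\mathbf{R}$, via e.g. writing $A = S + \mathbf{i}D$ in a basis where the symmetric part $S$ vanishes, or via an eigen-decomposition argument), and establishing the homogeneity $W(\gamma)$ in $\gamma$ that delivers $\mathrm{tr}(B^{\mathrm{T}}PB)=n\gamma$ exactly rather than up to a constant; the rest is routine Lyapunov/Riccati bookkeeping. Since this lemma is quoted from \cite{zhou14auto}, I would in fact just cite it, but the sketch above is how I would reconstruct the proof.
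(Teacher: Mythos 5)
The paper offers no proof of this lemma at all: it is imported verbatim from \cite{zhou14auto} (via the Lyapunov-equation parametrization of the ARE solution), so your closing remark that you would simply cite it is exactly what the paper does. Your reconstruction of the parts that are genuinely routine is correct and is the standard argument: completing the square to turn (\ref{are}) into (\ref{ppfeq34}) for $W=P^{-1}$, anti-Hurwitzness of $A+\tfrac{\gamma}{2}I_n$ plus controllability giving the unique $W\succ0$ (explicitly $W(\gamma)=\int_0^\infty \mathrm{e}^{-\gamma t}\mathrm{e}^{-At}BB^{\mathrm{T}}\mathrm{e}^{-A^{\mathrm{T}}t}\,\mathrm{d}t$), the Lyapunov certificate $(A+BF)^{\mathrm{T}}P+P(A+BF)=-\gamma P-PBB^{\mathrm{T}}P\prec0$, and the intertwining $P(A+BF)=-(A+\gamma I_n)^{\mathrm{T}}P$ which exponentiates to (\ref{eq29}).

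Where your sketch has a real gap is the trace identity, and the two routes you float do not work. Taking the trace of (\ref{ppfeq34}) gives $2\,\mathrm{tr}(AW)+\gamma\,\mathrm{tr}(W)=\mathrm{tr}(BB^{\mathrm{T}})$, but $\mathrm{tr}(AW)=0$ is \emph{false} in general for $A$ with purely imaginary spectrum (only $\mathrm{tr}(A)=0$ is forced; e.g.\ $A=\left[\begin{smallmatrix}1&1\\-2&-1\end{smallmatrix}\right]$, $W=\mathrm{diag}(1,2)$ gives $\mathrm{tr}(AW)=-1$), and even if it held you would obtain a statement about $\mathrm{tr}(W)$, not about $\mathrm{tr}(B^{\mathrm{T}}PB)$. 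The homogeneity $W(\gamma)\sim\gamma^{-1}W(1)$ is likewise false: the $\gamma$-dependence enters through the factor $\mathrm{e}^{-\gamma t}$ in the integral, not through a scaling. The correct one-line derivation is to right-multiply the ARE by $W=P^{-1}$, giving $A^{\mathrm{T}}+PAW-PBB^{\mathrm{T}}=-\gamma I_n$, and take traces: $\mathrm{tr}(PAW)=\mathrm{tr}(AWP)=\mathrm{tr}(A)=0$ (this is where the imaginary-axis hypothesis is actually used), hence $\mathrm{tr}(B^{\mathrm{T}}PB)=n\gamma$. Then $PBB^{\mathrm{T}}P\leq n\gamma P$ follows from $\lambda_{\max}(P^{1/2}BB^{\mathrm{T}}P^{1/2})=\lambda_{\max}(B^{\mathrm{T}}PB)\leq\mathrm{tr}(B^{\mathrm{T}}PB)$. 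Your argument for $\lim_{\gamma\downarrow0}P(\gamma)=0$ is also left dangling; the clean route is via the integral representation above, showing $\lambda_{\min}(W(\gamma))\to\infty$ as $\gamma\downarrow0$ (monotone convergence plus the fact that controllability and the polynomial growth of $\mathrm{e}^{-At}$ force $\int_0^T\mathrm{e}^{-At}BB^{\mathrm{T}}\mathrm{e}^{-A^{\mathrm{T}}t}\,\mathrm{d}t$ to blow up in every direction as $T\to\infty$), rather than trying to squeeze it out of the trace identity alone.
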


In the next subsection we will show that the TPPF (\ref{tppf}) can indeed
stabilize system (\ref{sys1}).

\subsection{Stability of the Closed-Loop System}

The aim of this subsection is to prove the following result.

\begin{theorem}
\label{th1}Let $F=-B^{\mathrm{T}}P$ where $P$ is the unique positive definite
solution to the parametric ARE (\ref{are}). Then there exits a scalar
$\gamma^{\ast}>0$ such that the closed-loop system consisting of (\ref{sys1})
and the TPPF (\ref{tppf}) is asymptotically stable for all $\gamma \in \left(
0,\gamma^{\ast}\right)  .$
\end{theorem}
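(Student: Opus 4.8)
The plan is to substitute the TPPF control law (\ref{tppf}) into the transformed system (\ref{sys0}) and rewrite the resulting closed-loop dynamics as a small perturbation of the exponentially stable system $\dot y = (A+BF)y$, where the perturbation is governed by the integral delay system appearing in Proposition~\ref{pp0}, and then show that this perturbation vanishes as $\gamma\downarrow 0$ in an appropriate quantitative sense. Concretely, writing $u(t)=Fy(t)-F\mathrm{e}^{(A+BF)\tau}\mathrm{e}^{-A\tau}\varphi(t)$ by comparing (\ref{tppf}) with (\ref{ppf}) and using (\ref{eq6}), I would obtain a closed-loop equation of the form $\dot y(t)=(A+BF)y(t)+B\rho(t)+(\text{truncation error driven by }\varphi)$. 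The first step is therefore to derive a clean closed-loop representation and to express $\varphi(t)$ (hence the error term) in terms of past values of $y$ and of $u$ over the interval $[t-\tau,t]$, using the definition $\varphi(t)=\sum_{i}\int_{t-\tau}^{t-\tau_i}\mathrm{e}^{A(t-s-\tau_i)}B_iu(s)\,\mathrm{d}s$.

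Next I would invoke Lemma~\ref{tppflm1} to get the scaling estimates that make the perturbation argument work: $\lim_{\gamma\downarrow0}P(\gamma)=0$, $PBB^{\mathrm T}P\le n\gamma P$, $\mathrm{tr}(B^{\mathrm T}PB)=n\gamma$, and the key identity (\ref{eq29}) that lets me rewrite $F\mathrm{e}^{(A+BF)\tau}=-B^{\mathrm T}P\mathrm{e}^{(A+BF)\tau}$ in the form $-B^{\mathrm T}\mathrm{e}^{-(A+\gamma I_n)^{\mathrm T}\tau}P$ (applying (\ref{eq29}) iteratively over the total delay $\tau$). This identity is crucial because it turns the otherwise uncontrolled factor $F\mathrm{e}^{(A+BF)\tau}\mathrm{e}^{-A\tau}$ into $-B^{\mathrm T}\mathrm{e}^{-(A+\gamma I_n)^{\mathrm T}\tau}\mathrm{e}^{-A^{\mathrm T}\tau}P\cdot(\cdots)$ — wait, more carefully, into something of the form $(\text{bounded in }\gamma)\times P$, so that the whole control input and the vector $\varphi$ are "of order $\gamma$" uniformly, matching the heuristic in the derivation of (\ref{tppf}). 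With these estimates in hand, the truncation-error term is "of order $\gamma^2$" relative to the state, and the $B\rho$ term is handled exactly as in Proposition~\ref{pp0}: the integral delay system (\ref{ids}) is stable for small $\gamma$ because, after the change of scale, it is a contraction — one estimates $\|F\mathrm{e}^{(A+BF)(t-s)}B\|$ on $[t-\tau,t]$ using $\mathrm{e}^{(A+BF)\cdot}$ bounds and $\|FB\|,\|PB\|=O(\sqrt\gamma)$ or $O(\gamma)$, making $F\int_{t-\tau}^t\mathrm{e}^{(A+BF)(t-s)}B\,\mathrm{d}s$ have norm strictly less than $1$ for $\gamma<\gamma^\ast$.

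The concluding step is a Lyapunov / variation-of-constants argument. Using $V(y)=y^{\mathrm T}Py$ along the closed-loop trajectory, the ARE (\ref{are}) gives $\dot V\le -\gamma V + 2y^{\mathrm T}PB\rho(t) + (\text{error terms})$; one then bounds the cross term and the error term by $\gamma$ times a supremum of $V$ over a delay window, invoking the smallness of the integral-delay contraction and of the truncation error, and closes the loop with a Razumikhin-type or Halanay-type inequality to conclude exponential decay of $V$, hence asymptotic stability of $y$, and finally of $x$ via the relation (\ref{yx}) (or (\ref{yz})/(\ref{zt})), which shows $x$ is bounded by $y$ plus a convolution of $u$ that also decays. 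The main obstacle I anticipate is making the perturbation bounds genuinely uniform in $\gamma$ over the fixed delay window $[0,\tau]$: the operator norms of $\mathrm{e}^{(A+BF)t}$ and of $\mathrm{e}^{-A\tau}$ must be controlled independently of $\gamma$ (this is where Assumption~\ref{ass2}, i.e. all eigenvalues of $A$ on the imaginary axis plus controllability, together with the explicit $W(\gamma)$ characterization in Lemma~\ref{tppflm1} and identity (\ref{eq29}), does the real work), and then correctly tracking that the residual term is $o(\gamma)$ relative to the $-\gamma V$ dissipation so that the net Lyapunov derivative is negative for all sufficiently small $\gamma$.
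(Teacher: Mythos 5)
Your proposal is correct and follows essentially the same route as the paper: decompose the closed loop as $\dot y=(A+BF)y+B\rho$ with $\rho$ satisfying the integral delay equation driven by the truncation error $\phi=F\mathrm{e}^{(A+BF)\tau}\mathrm{e}^{-A\tau}\varphi$, use Lemma~\ref{tppflm1} (in particular $PBB^{\mathrm T}P\le n\gamma P$ and the identity (\ref{eq29})) to make every perturbation $O(\gamma)$-small uniformly over the delay window, and close with a Lyapunov argument before recovering $x\to0$ from (\ref{eq6}). The only cosmetic difference is that the paper replaces your Razumikhin/Halanay closing step by an explicit composite Lyapunov--Krasovskii functional ($V_1,V_2,V_3$ plus a double-integral term), with your ``contraction'' condition appearing there as the smallness conditions (\ref{eq46}) and (\ref{eq36}).
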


\begin{proof}
Consider the closed-loop system consisting of (\ref{sys0}) and (\ref{tppf})
given by%
\begin{align}
\dot{y}\left(  t\right)   &  =Ay\left(  t\right)  +BF\mathrm{e}^{\left(
A+BF\right)  \tau}\mathrm{e}^{-A\tau}x\left(  t\right) \nonumber \\
&  =Ay\left(  t\right)  +BF\mathrm{e}^{\left(  A+BF\right)  \tau}\left(
y\left(  t-\tau \right)  +\mathrm{e}^{-A\tau}\varphi \left(  t\right)  \right)
\nonumber \\
&  =Ay\left(  t\right)  +BF\mathrm{e}^{\left(  A+BF\right)  \tau}y\left(
t-\tau \right)  +BF\mathrm{e}^{\left(  A+BF\right)  \tau}\mathrm{e}^{-A\tau
}\varphi \left(  t\right) \nonumber \\
&  =\left(  A+BF\right)  y\left(  t\right)  +B\rho \left(  t\right)  ,
\label{eq21}%
\end{align}
where%
\begin{equation}
\left \{
\begin{array}
[c]{l}%
\rho \left(  t\right)  =F\left(  \mathrm{e}^{\left(  A+BF\right)  \tau}y\left(
t-\tau \right)  -y\left(  t\right)  \right)  +\phi \left(  t\right)  ,\\
\phi \left(  t\right)  =F\mathrm{e}^{\left(  A+BF\right)  \tau}\mathrm{e}%
^{-A\tau}\varphi \left(  t\right)  .
\end{array}
\right.  \label{eq22}%
\end{equation}
By using the variation of constant formula, we get from (\ref{eq21}) that%
\begin{equation}
y\left(  t\right)  =\mathrm{e}^{\left(  A+BF\right)  \tau}y\left(
t-\tau \right)  +\int_{t-\tau}^{t}\mathrm{e}^{\left(  A+BF\right)  \left(
t-s\right)  }B\rho \left(  s\right)  \mathrm{d}s, \label{eq24}%
\end{equation}
by which $\rho \left(  t\right)  $ can be simplified as%
\begin{equation}
\rho \left(  t\right)  =-F\int_{t-\tau}^{t}\mathrm{e}^{\left(  A+BF\right)
\left(  t-s\right)  }B\rho \left(  s\right)  \mathrm{d}s+\phi \left(  t\right)
. \label{eq23}%
\end{equation}
For later use, we notice from the definition of $\varphi \left(  t\right)  $ in
(\ref{eq6}) that%
\begin{equation}
\left \{
\begin{array}
[c]{l}%
\phi \left(  t\right)  =F\mathrm{e}^{\left(  A+BF\right)  \tau}\mathrm{e}%
^{-A\tau}%
{\displaystyle \sum \limits_{i=1}^{3}}
\varphi_{i}\left(  t\right)  ,\\
\varphi_{i}\left(  t\right)  =\int_{t-\tau}^{t-\tau_{i}}\mathrm{e}^{A\left(
t-s-\tau_{i}\right)  }B_{i}u\left(  s\right)  \mathrm{d}s,\;i=1,2,3.
\end{array}
\right.  \label{eq25}%
\end{equation}
Moreover, by equation (\ref{eq6}) we can write $u\left(  t\right)  $ as%
\begin{align}
u\left(  t\right)   &  =F\mathrm{e}^{\left(  A+BF\right)  \tau}\left(
y\left(  t-\tau \right)  +\mathrm{e}^{-A\tau}\varphi \left(  t\right)  \right)
\nonumber \\
&  =F\mathrm{e}^{\left(  A+BF\right)  \tau}y\left(  t-\tau \right)
+\phi \left(  t\right)  . \label{eq26}%
\end{align}

Consider a Lyapunov function $V_{1}\left(  y\left(  t\right)  \right)
=y^{\mathrm{T}}\left(  t\right)  Py\left(  t\right)  $ whose time-derivative
along system (\ref{eq21}) is given by%
\begin{align}
\dot{V}_{1}\left(  y\left(  t\right)  \right)   &  =y^{\mathrm{T}}\left(
t\right)  \left(  \left(  A+BF\right)  ^{\mathrm{T}}P+P\left(  A+BF\right)
\right)  y\left(  t\right)  +2y^{\mathrm{T}}\left(  t\right)  PB\rho \left(
t\right) \nonumber \\
&  =-\gamma y^{\mathrm{T}}\left(  t\right)  Py\left(  t\right)  -y^{\mathrm{T}%
}\left(  t\right)  PBB^{\mathrm{T}}Py\left(  t\right)  +2y^{\mathrm{T}}\left(
t\right)  PB\rho \left(  t\right) \nonumber \\
&  \leq-\gamma y^{\mathrm{T}}\left(  t\right)  Py\left(  t\right)  +\left \Vert
\rho \left(  t\right)  \right \Vert ^{2}, \label{eq47}%
\end{align}
where we have used Lemma \ref{tppflm1}. By (\ref{eq23}) and the Jensen
inequality we can derive%
\begin{align}
\left \Vert \rho \left(  t\right)  \right \Vert ^{2}  &  \leq2\left(
\int_{t-\tau}^{t}\mathrm{e}^{\left(  A+BF\right)  \left(  t-s\right)  }%
B\rho \left(  s\right)  \mathrm{d}s\right)  ^{\mathrm{T}}F^{\mathrm{T}}%
F\int_{t-\tau}^{t}\mathrm{e}^{\left(  A+BF\right)  \left(  t-s\right)  }%
B\rho \left(  s\right)  \mathrm{d}s+2\left \Vert \phi \left(  t\right)
\right \Vert ^{2}\nonumber \\
&  \leq2n\gamma \left \Vert P\right \Vert \tau \int_{t-\tau}^{t}\rho^{\mathrm{T}%
}\left(  s\right)  B^{\mathrm{T}}\mathrm{e}^{\left(  A+BF\right)
^{\mathrm{T}}\left(  t-s\right)  }\mathrm{e}^{\left(  A+BF\right)  \left(
t-s\right)  }B\rho \left(  s\right)  \mathrm{d}s+2\left \Vert \phi \left(
t\right)  \right \Vert ^{2}\nonumber \\
&  \leq2n\gamma \left \Vert P\right \Vert \tau \left \Vert B\right \Vert
^{2}\mathrm{e}^{2\left(  \left \Vert A\right \Vert +\left \Vert B\right \Vert
^{2}\left \Vert P\right \Vert \right)  \tau}\int_{t-\tau}^{t}\left \Vert
\rho \left(  s\right)  \right \Vert ^{2}\mathrm{d}s+2\left \Vert \phi \left(
t\right)  \right \Vert ^{2}\nonumber \\
&  =c_{1}\gamma \left \Vert P\right \Vert \int_{t-\tau}^{t}\left \Vert \rho \left(
s\right)  \right \Vert ^{2}\mathrm{d}s+2\left \Vert \phi \left(  t\right)
\right \Vert ^{2}, \label{eq27}%
\end{align}
where
\begin{equation}
c_{1}=c_{1}\left(  \gamma \right)  =2n\tau \left \Vert B\right \Vert
^{2}\mathrm{e}^{2(\left \Vert A\right \Vert +\left \Vert B\right \Vert
^{2}\left \Vert P\right \Vert )\tau}. \label{c1}%
\end{equation}
Let $\gamma_{1}^{\ast}>0$ be such that%
\begin{equation}
c_{1}\gamma \left \Vert P\right \Vert \tau+\sqrt{\left \Vert P\right \Vert }%
\leq1,\; \forall \gamma \in \left(  0,\gamma_{1}^{\ast}\right)  . \label{eq46}%
\end{equation}
Then the time-derivative of the second Lyapunov functional%
\begin{equation}
V_{2}\left(  \rho \left(  t\right)  \right)  =c_{1}\gamma \left \Vert
P\right \Vert \int_{-\tau}^{0}\int_{t-s}^{t}\left \Vert \rho \left(
\sigma \right)  \right \Vert ^{2}\mathrm{d}\sigma \mathrm{d}s \label{eq45}%
\end{equation}
along (\ref{eq27}) satisfies%
\begin{align}
\dot{V}_{2}\left(  \rho \left(  t\right)  \right)   &  =c_{1}\gamma \left \Vert
P\right \Vert \tau \left \Vert \rho \left(  t\right)  \right \Vert ^{2}-c_{1}%
\gamma \left \Vert P\right \Vert \int_{t-\tau}^{t}\left \Vert \rho \left(
\sigma \right)  \right \Vert ^{2}\mathrm{d}\sigma \nonumber \\
&  \leq c_{1}\gamma \left \Vert P\right \Vert \tau \left \Vert \rho \left(
t\right)  \right \Vert ^{2}+2\left \Vert \phi \left(  t\right)  \right \Vert
^{2}-\left \Vert \rho \left(  t\right)  \right \Vert ^{2}\nonumber \\
&  \leq2\left \Vert \phi \left(  t\right)  \right \Vert ^{2}-\sqrt{\left \Vert
P\right \Vert }\left \Vert \rho \left(  t\right)  \right \Vert ^{2}. \label{eq44}%
\end{align}

Now by using (\ref{eq25}) we can obtain%
\begin{align}
\left \Vert \phi \left(  t\right)  \right \Vert ^{2} &  =\varphi^{\mathrm{T}%
}\left(  t\right)  \mathrm{e}^{-A^{\mathrm{T}}\tau}\mathrm{e}^{\left(
A+BF\right)  ^{\mathrm{T}}\tau}F^{\mathrm{T}}F\mathrm{e}^{\left(  A+BF\right)
\tau}\mathrm{e}^{-A\tau}\varphi \left(  t\right)  \nonumber \\
&  \leq n\gamma \left \Vert P\right \Vert \mathrm{e}^{2\left(  \left \Vert
A\right \Vert +\left \Vert B\right \Vert ^{2}\left \Vert P\right \Vert \right)
\tau}\mathrm{e}^{2\left \Vert A\right \Vert \tau}\varphi^{\mathrm{T}}\left(
t\right)  \varphi \left(  t\right)  \nonumber \\
&  \leq3n\gamma \left \Vert P\right \Vert \mathrm{e}^{2\left(  \left \Vert
A\right \Vert +\left \Vert B\right \Vert ^{2}\left \Vert P\right \Vert \right)
\tau}\mathrm{e}^{2\left \Vert A\right \Vert \tau}%
{\displaystyle \sum \limits_{i=1}^{3}}
\left \Vert \varphi_{i}\left(  t\right)  \right \Vert ^{2},\label{eq30}%
\end{align}
and, by using the Jensen inequality again, we get, for $i=1,2,3,$%
\begin{align}
\left \Vert \varphi_{i}\left(  t\right)  \right \Vert ^{2} &  \leq \tau
\int_{t-\tau}^{t-\tau_{i}}u^{\mathrm{T}}\left(  s\right)  B_{i}^{\mathrm{T}%
}\mathrm{e}^{A^{\mathrm{T}}\left(  t-s-\tau_{i}\right)  }\mathrm{e}^{A\left(
t-s-\tau_{i}\right)  }B_{i}u\left(  s\right)  \mathrm{d}s\nonumber \\
&  \leq \left \Vert B_{i}\right \Vert ^{2}\mathrm{e}^{2\left \Vert A\right \Vert
\tau}\left(  \tau-\tau_{i}\right)  \int_{t-\tau}^{t-\tau_{i}}\left \Vert
u\left(  s\right)  \right \Vert ^{2}\mathrm{d}s\nonumber \\
&  \leq%
{\displaystyle \sum \limits_{i=1}^{3}}
\left \Vert B_{i}\right \Vert ^{2}\mathrm{e}^{2\left \Vert A\right \Vert \tau}%
\tau \int_{t-\tau}^{t}\left \Vert u\left(  s\right)  \right \Vert ^{2}%
\mathrm{d}s.\label{eq31}%
\end{align}
Moreover, it follows from (\ref{eq26}) that%
\begin{align}
\int_{t-\tau}^{t}\left \Vert u\left(  s\right)  \right \Vert ^{2}\mathrm{d}s &
\leq2\int_{t-\tau}^{t}\left \Vert \phi \left(  s\right)  \right \Vert
^{2}\mathrm{d}s+2\int_{t-\tau}^{t}y^{\mathrm{T}}\left(  s-\tau \right)
\mathrm{e}^{\left(  A+BF\right)  ^{\mathrm{T}}\tau}F^{\mathrm{T}}%
F\mathrm{e}^{\left(  A+BF\right)  \tau}y\left(  s-\tau \right)  \mathrm{d}%
s\nonumber \\
&  =2\int_{t-\tau}^{t}\left \Vert \phi \left(  s\right)  \right \Vert
^{2}\mathrm{d}s+2\int_{t-2\tau}^{t-\tau}y^{\mathrm{T}}\left(  s\right)
P\mathrm{e}^{-\left(  A+\gamma I\right)  \tau}BB^{\mathrm{T}}\mathrm{e}%
^{-\left(  A+\gamma I\right)  ^{\mathrm{T}}\tau}Py\left(  s\right)
\mathrm{d}s\nonumber \\
&  \leq2\int_{t-\tau}^{t}\left \Vert \phi \left(  s\right)  \right \Vert
^{2}\mathrm{d}s+2\left \Vert P\right \Vert \left \Vert \mathrm{e}^{-A\tau
}B\right \Vert ^{2}\int_{t-2\tau}^{t-\tau}y^{\mathrm{T}}\left(  s\right)
Py\left(  s\right)  \mathrm{d}s\nonumber \\
&  \leq2\int_{t-\tau}^{t}\left \Vert \phi \left(  s\right)  \right \Vert
^{2}\mathrm{d}s+2\left \Vert P\right \Vert \left \Vert \mathrm{e}^{-A\tau
}B\right \Vert ^{2}\int_{t-2\tau}^{t}y^{\mathrm{T}}\left(  s\right)  Py\left(
s\right)  \mathrm{d}s,\label{eq32}%
\end{align}
where we have used the inequality (\ref{eq29}). Inserting (\ref{eq32}) into
(\ref{eq31}) and then (\ref{eq30}) gives%
\begin{align}
\left \Vert \phi \left(  t\right)  \right \Vert ^{2} &  \leq9n\gamma
\tau \left \Vert P\right \Vert \mathrm{e}^{2\left(  \left \Vert A\right \Vert
+\left \Vert B\right \Vert ^{2}\left \Vert P\right \Vert \right)  \tau}%
\mathrm{e}^{4\left \Vert A\right \Vert \tau}%
{\displaystyle \sum \limits_{i=1}^{3}}
\left \Vert B_{i}\right \Vert ^{2}\int_{t-\tau}^{t}u^{\mathrm{T}}\left(
s\right)  u\left(  s\right)  \mathrm{d}s\nonumber \\
&  \leq c_{2}\gamma \left \Vert P\right \Vert \int_{t-\tau}^{t}\left \Vert
\phi \left(  s\right)  \right \Vert ^{2}\mathrm{d}s+c_{3}\gamma \left \Vert
P\right \Vert ^{2}\int_{t-2\tau}^{t}y^{\mathrm{T}}\left(  s\right)  Py\left(
s\right)  \mathrm{d}s,\label{34}%
\end{align}
where%
\begin{equation}
\left \{
\begin{array}
[c]{l}%
c_{2}=c_{2}\left(  \gamma \right)  =18n\tau \mathrm{e}^{2\left(  \left \Vert
A\right \Vert +\left \Vert B\right \Vert ^{2}\left \Vert P\right \Vert \right)
\tau}\mathrm{e}^{4\left \Vert A\right \Vert \tau}%
{\displaystyle \sum \limits_{i=1}^{3}}
\left \Vert B_{i}\right \Vert ^{2},\\
c_{3}=c_{3}\left(  \gamma \right)  =18n\tau \mathrm{e}^{2\left(  \left \Vert
A\right \Vert +\left \Vert B\right \Vert ^{2}\left \Vert P\right \Vert \right)
\tau}\mathrm{e}^{4\left \Vert A\right \Vert \tau}%
{\displaystyle \sum \limits_{i=1}^{3}}
\left \Vert B_{i}\right \Vert ^{2}\left \Vert \mathrm{e}^{-A\tau}B\right \Vert
^{2}.
\end{array}
\right.  \label{eq35}%
\end{equation}
Let $\gamma_{2}^{\ast}\in \left(  0,\gamma_{1}^{\ast}\right)  $ be such that%
\begin{equation}
c_{2}\gamma \left \Vert P\right \Vert \tau+\sqrt{\left \Vert P\right \Vert }%
\leq1,\; \forall \gamma \in \left(  0,\gamma_{2}^{\ast}\right)  .\label{eq36}%
\end{equation}
Then the time-derivative of the Lyapunov functional%
\begin{equation}
V_{3}\left(  \phi_{t}\right)  =c_{2}\gamma \left \Vert P\right \Vert \int_{-\tau
}^{0}\int_{t-s}^{t}\left \Vert \phi \left(  \sigma \right)  \right \Vert
^{2}\mathrm{d}\sigma \mathrm{d}s,\label{eq37}%
\end{equation}
satisfies%
\begin{align}
\dot{V}_{3}\left(  \phi_{t}\right)   &  =c_{2}\gamma \left \Vert P\right \Vert
\tau \left \Vert \phi \left(  t\right)  \right \Vert ^{2}-c_{2}\gamma \left \Vert
P\right \Vert \int_{t-\tau}^{t}\left \Vert \phi \left(  \sigma \right)
\right \Vert ^{2}\mathrm{d}\sigma \nonumber \\
&  \leq c_{2}\gamma \left \Vert P\right \Vert \tau \left \Vert \phi \left(
t\right)  \right \Vert ^{2}+c_{3}\gamma \left \Vert P\right \Vert ^{2}%
\int_{t-2\tau}^{t}y^{\mathrm{T}}\left(  s\right)  Py\left(  s\right)
\mathrm{d}s-\left \Vert \phi \left(  t\right)  \right \Vert ^{2}\nonumber \\
&  \leq c_{3}\gamma \left \Vert P\right \Vert ^{2}\int_{t-2\tau}^{t}%
y^{\mathrm{T}}\left(  s\right)  Py\left(  s\right)  \mathrm{d}s-\sqrt
{\left \Vert P\right \Vert }\left \Vert \phi \left(  t\right)  \right \Vert
^{2}.\label{eq38}%
\end{align}

Finally, we choose%
\begin{align}
V\left(  y_{t},\rho_{t},\phi_{t}\right)   &  =\frac{1}{8}\left \Vert
P\right \Vert V_{1}\left(  y\left(  t\right)  \right)  +\frac{1}{4}%
\sqrt{\left \Vert P\right \Vert }V_{2}\left(  \rho_{t}\right)  +V_{3}\left(
\phi_{t}\right) \nonumber \\
&  \quad+c_{3}\gamma \left \Vert P\right \Vert ^{2}\int_{-2\tau}^{0}\int
_{t-s}^{t}y^{\mathrm{T}}\left(  \sigma \right)  Py\left(  \sigma \right)
\mathrm{d}\sigma \mathrm{d}s, \label{eq39}%
\end{align}
and it follows from (\ref{eq47}), (\ref{eq44}) and (\ref{eq38}) that%
\begin{align}
\dot{V}\left(  y_{t},\rho_{t},\phi_{t}\right)   &  \leq \frac{1}{8}\left \Vert
P\right \Vert \left(  -\gamma y^{\mathrm{T}}\left(  t\right)  Py\left(
t\right)  +\left \Vert \rho \left(  t\right)  \right \Vert ^{2}\right)
\nonumber \\
&  \quad+\frac{1}{4}\left(  \sqrt{\left \Vert P\right \Vert }\left(  2\left \Vert
\phi \left(  t\right)  \right \Vert ^{2}-\sqrt{\left \Vert P\right \Vert
}\left \Vert \rho \left(  t\right)  \right \Vert ^{2}\right)  \right) \nonumber \\
&  \quad+c_{3}\gamma \left \Vert P\right \Vert ^{2}\int_{t-2\tau}^{t}%
y^{\mathrm{T}}\left(  s\right)  Py\left(  s\right)  \mathrm{d}s-\sqrt
{\left \Vert P\right \Vert }\left \Vert \phi \left(  t\right)  \right \Vert
^{2}\nonumber \\
&  \quad+2\tau c_{3}\gamma \left \Vert P\right \Vert ^{2}y^{\mathrm{T}}\left(
t\right)  Py\left(  t\right)  -c_{3}\gamma \left \Vert P\right \Vert ^{2}%
\int_{t-2\tau}^{t}y^{\mathrm{T}}\left(  s\right)  Py\left(  s\right)
\mathrm{d}s\nonumber \\
&  =-\frac{1}{8}\left \Vert P\right \Vert \left \Vert \rho \left(  t\right)
\right \Vert ^{2}-\frac{1}{2}\sqrt{\left \Vert P\right \Vert }\left \Vert
\phi \left(  t\right)  \right \Vert ^{2}\nonumber \\
&  \quad-\left(  \frac{1}{8}-2\tau c_{3}\left \Vert P\right \Vert \right)
\left \Vert P\right \Vert \gamma y^{\mathrm{T}}\left(  t\right)  Py\left(
t\right)  . \label{eq40}%
\end{align}
Hence there exists a $\gamma^{\ast}\in \left(  0,\gamma_{2}^{\ast}\right)  $
such that%
\begin{align}
\dot{V}\left(  y_{t},\rho_{t},\phi_{t}\right)   &  \leq-\frac{1}{8}\left \Vert
P\right \Vert \left \Vert \rho \left(  t\right)  \right \Vert ^{2}-\frac{1}%
{2}\sqrt{\left \Vert P\right \Vert }\left \Vert \phi \left(  t\right)  \right \Vert
^{2}\nonumber \\
&  \quad-\frac{1}{16}\left \Vert P\right \Vert \gamma y^{\mathrm{T}}\left(
t\right)  Py\left(  t\right)  ,\; \forall \gamma \in \left(  0,\gamma^{\ast
}\right)  . \label{eq41}%
\end{align}
Then by a similar argument in \cite{zld10siamjco}, we can show that
$\lim_{t\rightarrow \infty}\left \Vert \rho \left(  t\right)  \right \Vert
=\lim_{t\rightarrow \infty}\left \Vert \phi \left(  t\right)  \right \Vert
=\lim_{t\rightarrow \infty}\left \Vert y\left(  t\right)  \right \Vert =0.$
Consequently, by (\ref{eq6}) we know $\lim_{t\rightarrow \infty}\left \Vert
x\left(  t\right)  \right \Vert =0$, which implies asymptotic stability. The
proof is finished.
\end{proof}

\begin{remark}
We point out that all the plant parameters and the delays in system
(\ref{sys00}) can be time-varying as in \cite{zhou14auto} and similar results
can be obtained. We here however only consider time-invariant parameters for
the sake of brevity.
\end{remark}

\section{\label{sec4}Applications to the Consensus of Multi-Agent Systems}

\subsection{Problem Formulation and Assumptions}

We consider a continuous-time high-order multi-agent system described by%
\begin{equation}
\dot{x}_{i}\left(  t\right)  =Ax_{i}\left(  t\right)  +%
{\displaystyle \sum \limits_{j=1}^{3}}
B_{j}u_{i}\left(  t-\tau_{j}\right)  ,\;i\in \mathbf{I}\left[  1,N\right]  ,
\label{agents}%
\end{equation}
where $x_{i}\in \mathbf{R}^{n}$ and $u_{i}\in \mathbf{R}^{m}$ are the state and
the control of Agent $i,$ respectively, $N\geq1$ is a given integer denoting
the number of agents, $\tau_{j},j=1,2,3$ are the input delays satisfying
(\ref{delay1}), and $A,B_{i}$ are given matrices. Let the communication
topology among these agents be characterized by a weighted directed graph
$\mathcal{G}\left(  \mathcal{N},\mathcal{E},\mathcal{A}\right)  ,$ where
$\mathcal{N}$ is the node set, $\mathcal{E}$ is the edge set, and
$\mathcal{A}=\left[  \alpha_{ij}\right]  \in \mathbf{R}^{N\times N}$ is the
weighted adjacency matrix. Denote the corresponding Laplacian by $L=\left[
l_{ij}\right]  \in \mathbf{R}^{N\times N}$ (see \cite{rc10book} for a detailed
introduction on graph theory).

We assume that Agent $i$ collects the state information of its neighboring
agents by the rule \cite{zl14auto}%
\begin{align}
z_{i}\left(  t\right)   &  =\sum \limits_{j\in N_{i}}\alpha_{ij}\left(
x_{i}\left(  t\right)  -x_{j}\left(  t\right)  \right) \nonumber \\
&  =\sum \limits_{j=1}^{N}l_{ij}x_{j}\left(  t\right)  ,\;i\in \mathbf{I}\left[
1,N\right]  , \label{eq21a}%
\end{align}
where $N_{i}=\{j:\alpha_{ij}\neq0\}.$ In this section, we are interested in
the design of the state feedback protocol $u_{i}\left(  t\right)
=u_{i}\left(  z_{i}\right)  ,i\in \mathbf{I}\left[  1,N\right]  $ that achieves
consensus of the multi-agent system (\ref{agents}). To this end, we first
introduce the concept of consensus.

\begin{definition}
The linear multi-agent system (\ref{agents}) achieves consensus if
$\lim_{t\rightarrow \infty}\left \Vert x_{i}\left(  t\right)  -x_{j}\left(
t\right)  \right \Vert =0,\; \forall i,j\in \mathbf{I}\left[  1,N\right]  .$
\end{definition}

\begin{remark}
\label{rm1}By definition, if the high-order multi-agent system (\ref{agents})
achieves consensus, then there exists a signal $s\left(  t\right)
\in \mathbf{R}^{n}$, which may be unbounded, such that $\lim_{t\rightarrow
\infty}\left \Vert x_{i}\left(  t\right)  -s\left(  t\right)  \right \Vert =0,\;
\forall i\in \mathbf{I}\left[  1,N\right]  .$ The signal $s\left(  t\right)  $
is referred to as the reference trajectory.
\end{remark}

According to the discussions in \cite{ldch10tcasi} and \cite{zl14auto}, to
ensure that the consensus value reached by the agents will not tend to
infinity exponentially, namely, the reference trajectory $s\left(  t\right)  $
defined in Remark \ref{rm1} is not exponentially diverging, the matrix $A$
should not contain eigenvalues in the open right-half plane \cite{ldch10tcasi}%
. Hence, without loss of generality, we assume that Assumption \ref{ass2} is
satisfied for (\ref{agents}) \cite{zl14auto}.

The following assumption is also necessary for the solvability of the
consensus problem (see, for example, \cite{zl14auto}).

\begin{assumption}
\label{ass3}The communication topology $\mathcal{G}\left(  \mathcal{N}%
,\mathcal{E},\mathcal{A}\right)  $ contains a directed spanning tree.
\end{assumption}

By this assumption we know that $\lambda \left(  L\right)  =\{0,\lambda
_{2},\ldots,\lambda_{N}\}$ with $\operatorname{Re}\left \{  \lambda
_{i}\right \}  >0,i\in \mathbf{I}\left[  2,N\right]  $ (see, for example,
\cite{rc10book}).

\subsection{Consensus by Truncated Pseudo-Predictor Feedback Protocols}

In this section we present our solutions to the consensus problem by using the
TPPF approach developed in Section \ref{sec3} for stabilization of the linear
system (\ref{sys1}).

\begin{theorem}
\label{th3}Let Assumptions \ref{ass2} and \ref{ass3} be satisfied,
$F=-B^{\mathrm{T}}P$ where $P>0$ is the unique positive definite solution to
the parametric ARE (\ref{are}) and $\mu$ be any positive number such that
\begin{equation}
\mu \geq \max_{i\in \mathbf{I}\left[  2,N\right]  }\left \{  \frac{1}%
{\operatorname{Re}\left \{  \lambda_{i}\right \}  }\right \}  . \label{eqmu}%
\end{equation}
Then, for any delay $\tau$ that is exactly known and can be arbitrarily large
yet bounded, there exists a number $\gamma^{\ast}=\gamma^{\ast}(\mu
,\tau,\left \{  \lambda_{i}\right \}  _{i=2}^{N})$ such that the consensus of
the multi-agent system (\ref{agents}) is achieved by the protocol%
\begin{equation}
u_{i}\left(  t\right)  =\mu F\mathrm{e}^{\left(  A+BF\right)  \tau}%
\mathrm{e}^{-A\tau}z_{i}\left(  t\right)  ,\;i\in \mathbf{I}\left[  1,N\right]
,\; \gamma \in(0,\gamma^{\ast}). \label{protocol}%
\end{equation}
Moreover, the reference signal $s\left(  t\right)  $ defined in Remark
\ref{rm1} satisfies%
\begin{equation}
\left \Vert s\left(  t\right)  \right \Vert \leq k\left(  1+t^{N^{\ast}%
-1}\right)  ,\; \forall t\geq0, \label{eq34}%
\end{equation}
where $N^{\ast}$ is the maximal algebraic multiplicity of the eigenvalues of
$A$ and $k$ is a positive function of the initial conditions.
\end{theorem}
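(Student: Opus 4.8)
The plan is to reduce the consensus problem to $N-1$ decoupled stabilization problems, each of which is covered by Theorem \ref{th1}. First I would stack the agent states as $x(t)=(x_1^{\mathrm{T}}(t),\ldots,x_N^{\mathrm{T}}(t))^{\mathrm{T}}$ and observe that, since $z_i(t)=\sum_{j=1}^N l_{ij}x_j(t)$, the closed-loop system under the protocol (\ref{protocol}) takes the Kronecker form
\begin{equation}
\dot{x}(t)=(I_N\otimes A)x(t)+\sum_{j=1}^3\bigl(\mu L\otimes (B_jF\mathrm{e}^{(A+BF)\tau}\mathrm{e}^{-A\tau})\bigr)x(t-\tau_j).
\end{equation}
Under Assumption \ref{ass3} the Laplacian $L$ has a simple zero eigenvalue with right eigenvector $\mathbf{1}_N$ and the remaining eigenvalues $\lambda_2,\ldots,\lambda_N$ in the open right-half plane. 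I would put $L$ into (complex) Jordan or Schur form, $T^{-1}LT=J$ with $J$ block upper triangular having $0,\lambda_2,\ldots,\lambda_N$ on the diagonal, and introduce the change of variables $\xi(t)=(T^{-1}\otimes I_n)x(t)$. Writing $\xi=(\xi_1^{\mathrm{T}},\ldots,\xi_N^{\mathrm{T}})^{\mathrm{T}}$, the first component $\xi_1$ corresponds to the zero eigenvalue and evolves as $\dot{\xi}_1=A\xi_1$ (it is the reference-trajectory dynamics), while the consensus error is governed by $\xi_2,\ldots,\xi_N$. Consensus of (\ref{agents}) is equivalent to $\lim_{t\to\infty}\|\xi_i(t)\|=0$ for $i\in\mathbf{I}[2,N]$.

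Next I would handle the transformed subsystems. Because $J$ is only block triangular (not diagonal when $L$ is not diagonalizable), the $\xi_i$-equations are coupled in a cascade: the equation for $\xi_i$ contains $\xi_i$ with coefficient $\mu\lambda_i$ on the delayed terms, plus possibly a forcing term from $\xi_{i-1}$ (the off-diagonal Jordan entries). I would first treat the diagonal part, i.e. the homogeneous system
\begin{equation}
\dot{\xi}_i(t)=A\xi_i(t)+\sum_{j=1}^3\mu\lambda_i\,B_jF\mathrm{e}^{(A+BF)\tau}\mathrm{e}^{-A\tau}\xi_i(t-\tau_j).
\end{equation}
This is precisely the closed loop of Theorem \ref{th1} applied to the plant $(\mu\lambda_i A_{\text{pair}}$-data$)$ — more carefully, it is the TPPF closed loop for the system $\dot{\xi}_i=A\xi_i+\sum_j(\mu\lambda_i B_j)u_i(t-\tau_j)$ with gain $F$ and matrix $B_i^{\mu}=\mu\lambda_i B$. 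The key observation I would exploit is the scaling built into (\ref{eqmu}): choosing $\mu\ge 1/\operatorname{Re}\{\lambda_i\}$ ensures $\operatorname{Re}\{\mu\lambda_i\}\ge 1$, so the "effective gain" $\mu\lambda_i$ sits in a region where the same ARE-based Lyapunov argument as in the proof of Theorem \ref{th1} goes through, with the same $P$ solving (\ref{are}) (the $\gamma$ there may need to be shrunk in a way depending on $\mu$, $\tau$, and $|\lambda_i|$, hence the dependence in $\gamma^{\ast}=\gamma^{\ast}(\mu,\tau,\{\lambda_i\})$). Since $\xi_i$ is complex, I would work with $\xi_i^{*}P\xi_i$ throughout and keep the cross terms real; this is routine. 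Then I would bootstrap up the cascade: assuming $\xi_{i-1}\to 0$ exponentially fast enough, the forcing term in the $\xi_i$-equation is an exponentially decaying perturbation, and the input-to-state-type estimate coming from the strict Lyapunov decrease (\ref{eq41}) in Theorem \ref{th1} gives $\xi_i\to 0$ as well. Taking $\gamma^{\ast}$ to be the minimum of the finitely many thresholds obtained for $i=2,\ldots,N$ finishes the convergence claim.

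Finally, for the reference-trajectory bound (\ref{eq34}): having shown $x_i(t)-s(t)\to 0$ where $s(t)$ is governed (modulo the decaying error) by $\dot{s}=As$, the signal $s(t)=\mathrm{e}^{At}s(0)$ up to an exponentially small correction, and $\|\mathrm{e}^{At}\|$ is bounded by $k(1+t^{N^{\ast}-1})$ since all eigenvalues of $A$ lie on the imaginary axis (Assumption \ref{ass2}) with maximal algebraic multiplicity $N^{\ast}$ — a standard estimate on matrix exponentials of neutrally-stable matrices. I expect the main obstacle to be the cascade step: because $L$ need not be diagonalizable, one must carry the off-diagonal Jordan couplings through the Lyapunov analysis of Theorem \ref{th1} and verify that the extra forcing terms remain dominated when $\gamma$ is small. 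This is where the proof genuinely goes beyond a verbatim citation of Theorem \ref{th1}, and where the dependence of $\gamma^{\ast}$ on $\{\lambda_i\}$ (through $\min_i\operatorname{Re}\{\lambda_i\}$ and $\max_i|\lambda_i|$) is forced. If one instead argues that $L$ can be assumed diagonalizable, or invokes a continuity/perturbation argument on $L$, the cascade step simplifies considerably.
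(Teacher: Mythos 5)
Your proposal follows essentially the same route as the paper: the Kronecker-product stacking, the transformation of $L$ to (bidiagonal) Jordan form to decouple the reference dynamics $\dot{\chi}_1=A\chi_1$ from the consensus-error cascade, the reduction of each error subsystem to a Theorem~\ref{th1}-type stabilization problem where the condition $\mu\operatorname{Re}\{\lambda_i\}\geq 1$ makes the ARE-based Lyapunov inequality $(A+B\mathcal{F})^{\mathrm{H}}P+P(A+B\mathcal{F})\leq-\gamma P-PBB^{\mathrm{T}}P$ go through with the same $P$, the choice $\gamma^{\ast}=\min_i\gamma^{\#}(\lambda_i)$, and the polynomial bound on $\mathrm{e}^{At}$ for the reference trajectory. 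If anything, you are more careful than the paper about the off-diagonal Jordan couplings, which the paper dispatches with an ``it is easy to see'' cascade argument.
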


\begin{proof}
Under Assumption \ref{ass3}, there exists a nonsingular matrix $U\in
\mathbf{C}^{N\times N},$ whose first column is $\mathbf{1}_{N}\triangleq
\left[  1,1,\ldots,1\right]  ^{\intercal}\in \mathbf{R}^{N},$ such that
\cite{rc10book}%
\begin{equation}
U^{-1}LU=\left[
\begin{array}
[c]{ccccc}%
0 &  &  &  & \\
& \lambda_{2} & \delta_{2} &  & \\
&  & \ddots & \ddots & \\
&  &  & \lambda_{N-1} & \delta_{N-1}\\
&  &  &  & \lambda_{N}%
\end{array}
\right]  \triangleq J, \label{cssjl}%
\end{equation}
where $\lambda_{i}$ are such that $\operatorname{Re}\{ \lambda_{i}%
\}>0,i\in \mathbf{I}\left[  2,N\right]  ,$ and $\delta_{i},i\in \mathbf{I}%
\left[  2,N-1\right]  ,$ equals either 1 or 0. For future use, we define
$\delta_{N}=0.$

Let $\mathscr{F}=\mu F\mathrm{e}^{\left(  A+BF\right)  \tau}\mathrm{e}%
^{-A\tau}.$ The closed-loop system consisting of (\ref{agents}) and
(\ref{protocol}) is given by%
\begin{align}
\dot{x}_{i}\left(  t\right)   &  =Ax_{i}\left(  t\right)  +%
{\displaystyle \sum \limits_{k=1}^{3}}
B_{k}\mathscr{F}z_{i}\left(  t-\tau_{k}\right) \nonumber \\
&  =Ax_{i}\left(  t\right)  +%
{\displaystyle \sum \limits_{k=1}^{3}}
B_{k}\mathscr{F}\sum \limits_{j=1}^{N}l_{ij}x_{j}\left(  t-\tau_{k}\right)
\nonumber \\
&  =Ax_{i}\left(  t\right)  +%
{\displaystyle \sum \limits_{k=1}^{3}}
\sum \limits_{j=1}^{N}l_{ij}B_{k}\mathscr{F}x_{j}\left(  t-\tau_{k}\right)
,\;i\in \mathbf{I}\left[  1,N\right]  . \label{eq51}%
\end{align}
By letting $x=\left[  x_{1}^{\intercal},x_{2}^{\intercal},\cdots
,x_{N}^{\intercal}\right]  ^{\intercal},$ the above system can be expressed
as
\begin{align}
\dot{x}\left(  t\right)   &  =\left(  I_{N}\otimes A\right)  x\left(
t\right)  +%
{\displaystyle \sum \limits_{k=1}^{3}}
\left(  L\otimes B_{k}\mathscr{F}\right)  x\left(  t-\tau_{k}\right)
\nonumber \\
&  =\left(  U\otimes I_{n}\right)  \left(  I_{N}\otimes A\right)  \left(
U^{-1}\otimes I_{n}\right)  x\left(  t\right) \nonumber \\
&  \quad+\left(  U\otimes I_{n}\right)
{\displaystyle \sum \limits_{k=1}^{3}}
\left(  J\otimes B_{k}\mathscr{F}\right)  \left(  U^{-1}\otimes I_{n}\right)
x\left(  t-\tau_{k}\right)  , \label{eq56}%
\end{align}
namely,%
\begin{equation}
\dot{\chi}\left(  t\right)  =\left(  I_{N}\otimes A\right)  \chi \left(
t\right)  +%
{\displaystyle \sum \limits_{k=1}^{3}}
\left(  J\otimes B_{k}\mathscr{F}\right)  \chi \left(  t-\tau_{k}\right)  ,
\label{eq57}%
\end{equation}
where $\chi=\left(  U^{-1}\otimes I_{n}\right)  x=\left[  \chi_{1}^{\intercal
},\chi_{2}^{\intercal},\cdots,\chi_{N}^{\intercal}\right]  ^{\intercal}.$ In
view of (\ref{34}), system (\ref{eq57}) is equivalent to%
\begin{equation}
\left \{
\begin{array}
[c]{ll}%
\dot{\chi}_{i}\left(  t\right)  =A\chi_{i}\left(  t\right)  , & i=1,\\
\dot{\chi}_{i}\left(  t\right)  =A\chi_{i}\left(  t\right)  +\lambda_{i}%
{\displaystyle \sum \limits_{k=1}^{3}}
B_{k}\mathscr{F}\chi_{i}\left(  t-\tau_{k}\right)  +\delta_{i}\chi
_{i+1}\left(  t-\tau_{k}\right)  , & i\in \mathbf{I}\left[  2,N\right]  ,
\end{array}
\right.  \label{eq58}%
\end{equation}
where $\chi_{N+1}\left(  t\right)  \equiv0.$ Since $x=\left(  U\otimes
I_{n}\right)  \chi,$ it follows from the special structure of $U$ that the
consensus is achieved if
\begin{equation}
\lim_{t\rightarrow \infty}\left \Vert \chi_{i}\left(  t\right)  \right \Vert
=0,\;i\in \mathbf{I}\left[  2,N\right]  , \label{eq59}%
\end{equation}
and in this case
\begin{equation}
x_{i}\left(  t\right)  \overset{t\rightarrow \infty}{\longrightarrow}\chi
_{1}\left(  t\right)  ,\;i\in \mathbf{I}\left[  1,N\right]  , \label{eq60}%
\end{equation}
which implies (\ref{eq34}) in view of the first equation of (\ref{eq58}). It
is easy to see that (\ref{eq59}) is true if and only if%
\begin{equation}
\dot{\chi}_{i}\left(  t\right)  =A\chi_{i}\left(  t\right)  +\lambda_{i}%
{\displaystyle \sum \limits_{k=1}^{3}}
B_{k}\mathscr{F}\chi_{i}\left(  t-\tau_{k}\right)  ,\;i\in \mathbf{I}\left[
2,N\right]  , \label{eq61}%
\end{equation}
are all asymptotically stable. In the following we will show that (\ref{eq61})
are all indeed asymptotically stable if (\ref{eqmu}) is satisfied.

We consider the following linear time-delay system%
\begin{equation}
\left \{
\begin{array}
[c]{l}%
\dot{\varkappa}\left(  t\right)  =A\varkappa \left(  t\right)  +%
{\displaystyle \sum \limits_{k=1}^{3}}
B_{k}u\left(  t-\tau_{k}\right)  ,\\
u\left(  t\right)  =\mathcal{F}\mathrm{e}^{\left(  A+BF\right)  \tau
}\mathrm{e}^{-A\tau}\varkappa \left(  t\right)  ,
\end{array}
\right.  \label{sys3}%
\end{equation}
where
\begin{equation}
\mathcal{F}=\lambda \mu F,\; \mu \geq \frac{1}{\operatorname{Re}\left \{
\lambda \right \}  },\; \operatorname{Re}\left \{  \lambda \right \}  >0.
\label{sys4}%
\end{equation}
By comparing (\ref{eq61}) with (\ref{sys3})-(\ref{sys4}), it suffices to show
that there exists a $\gamma^{\#}\left(  \lambda \right)  $ such that
(\ref{sys3}) is asymptotically stable for all $\gamma \in(0,\gamma^{\#}\left(
\lambda \right)  ).$ However, system (\ref{sys3}) is very similar to the
closed-loop system consisting of (\ref{sys1}) and the TPPF (\ref{tppf}). Hence
we need only to prove the stability of%
\begin{equation}
\dot{y}\left(  t\right)  =\left(  A+B\mathcal{F}\right)  y\left(  t\right)
+B\rho \left(  t\right)  , \label{eq69}%
\end{equation}
where%
\begin{equation}
\left \{
\begin{array}
[c]{l}%
\rho \left(  t\right)  =\mathcal{F}\left(  \mathrm{e}^{\left(  A+BF\right)
\tau}y\left(  t-\tau \right)  -y\left(  t\right)  \right)  +\phi \left(
t\right)  ,\\
\phi \left(  t\right)  =\mathcal{F}\mathrm{e}^{\left(  A+BF\right)  \tau
}\mathrm{e}^{-A\tau}\varphi \left(  t\right)  ,\\
\varphi \left(  t\right)  =%
{\displaystyle \sum \limits_{i=1}^{3}}
\int_{t-\tau}^{t-\tau_{i}}\mathrm{e}^{A\left(  t-s-\tau_{i}\right)  }%
B_{i}u\left(  s\right)  \mathrm{d}s.
\end{array}
\right.  \label{eq70}%
\end{equation}
The proof is almost the same as the proof of Theorem \ref{th1} where the
constants $c_{i},i=1,2,3,$ are redefined accordingly. Particularly, the
inequality in (\ref{eq47}) can be obtained since
\begin{align}
\left(  A+B\mathcal{F}\right)  ^{\mathrm{H}}P+P\left(  A+B\mathcal{F}\right)
&  =A^{\mathrm{T}}P+PA-2\operatorname{Re}\left \{  \lambda \right \}  \mu
PBB^{\mathrm{T}}P\nonumber \\
&  \leq A^{\mathrm{T}}P+PA-2PBB^{\mathrm{T}}P\nonumber \\
&  =-\gamma P-PBB^{\mathrm{T}}P, \label{eq71}%
\end{align}
where we have used (\ref{sys4}).

Based on the above discussion, the proof is finished by choosing $\gamma
^{\ast}=\min_{i\in \mathbf{I}\left[  2,N\right]  }\left \{  \gamma^{\#}\left(
\lambda_{i}\right)  \right \}  .$
\end{proof}

The TPPF protocols can also be generalized to the observer based output
feedback case. The details are omitted for brevity.

\section{\label{sec5}Two Illustrative Examples}

\subsection{Stabilization by Truncated Pseudo-Predictor Feedback}

Consider a linear system in the following form%
\begin{equation}
\dot{x}\left(  t\right)  =Ax\left(  t\right)  +B_{1}u\left(  t\right)
+B_{2}u\left(  t-\tau \right)  , \label{sysex}%
\end{equation}
where $\tau=\frac{\pi}{2}$ and%
\begin{equation}
A=\left[
\begin{array}
[c]{cccc}%
0 & 1 & 0 & 0\\
-1 & 0 & 1 & 0\\
0 & 0 & 0 & 1\\
0 & 0 & -1 & 0
\end{array}
\right]  ,\;B_{1}=\left[
\begin{array}
[c]{c}%
0\\
-1\\
0\\
0
\end{array}
\right]  ,\;B_{2}=\left[
\begin{array}
[c]{c}%
0\\
0\\
0\\
-1
\end{array}
\right]  . \label{ABB}%
\end{equation}
It follows that $\lambda \left(  A\right)  =\{ \pm \mathrm{i},\pm \mathrm{i}\}$
and $\left(  A,B\right)  $ is controllable, where $B$ is found to be%
\begin{equation}
B=\left[
\begin{array}
[c]{cccc}%
-\frac{1}{2} & \frac{1}{4}-\pi & -1 & 0
\end{array}
\right]  ^{\mathrm{T}}. \label{BB}%
\end{equation}
By Lemma \ref{lm0} and Theorem \ref{th1}, both the TPF and the TPPF
controllers take the form%
\begin{equation}
u\left(  t\right)  =\mathscr{F}\left(  \gamma \right)  x\left(  t\right)  ,
\label{2u}%
\end{equation}
where $\mathscr{F}\left(  \gamma \right)  =F_{\mathrm{TPF}}$ for the TPF and
$\mathscr{F}=F_{\mathrm{TPPF}}$ for the TPPF with $F_{\mathrm{TPF}}$ and
$F_{\mathrm{TPPF}}$ given by
\begin{align}
F_{\mathrm{TPF}}  &  =\left[
\begin{array}
[c]{cccc}%
4{\gamma}^{3} & -{\gamma}^{4}+4{\gamma}^{2} & 4\gamma+{\gamma}^{4}-2{\gamma
}^{3}-4{\gamma}^{2}-\frac{1}{4}\pi{\gamma}^{4}+\pi{\gamma}^{2} & -\frac{1}%
{2}{\gamma}^{4}+4{\gamma}^{3}-4{\gamma}^{2}-{\gamma}^{3}\pi
\end{array}
\right]  ,\label{ftpf}\\
F_{\mathrm{TPPF}}  &  =\frac{1}{4}\gamma \mathrm{e}{^{-\frac{1}{2}\gamma \pi}%
}\left[
\begin{array}
[c]{c}%
{\gamma}^{2}\left(  4\gamma \pi-3{\gamma}^{3}\pi+2{\gamma}^{4}+8{\gamma}%
^{2}-16\right) \\
2{\gamma}\left(  -4\gamma \pi-5{\gamma}^{3}\pi+3{\gamma}^{4}+18{\gamma}%
^{2}+8\right) \\
16+6{\gamma}^{4}-36{\gamma}^{3}+40{\gamma}^{2}-16\gamma-6{\gamma}^{5}%
-4\gamma \pi+10\pi{\gamma}^{4}-11{\gamma}^{3}\pi+8\pi{\gamma}^{2}\\
{\gamma}\left(  8{\gamma}^{3}+2{\gamma}^{2}-16\gamma+16+2{\gamma}^{5}%
+4\pi{\gamma}^{2}-4\gamma \pi-3\pi{\gamma}^{4}\right)
\end{array}
\right]  ^{\mathrm{T}}. \label{ftppf}%
\end{align}

The closed-loop system consisting of (\ref{sysex}) and (\ref{2u}) is given by%
\begin{equation}
\dot{x}\left(  t\right)  =\left(  A+B_{1}\mathscr{F}\left(  \gamma \right)
\right)  x\left(  t\right)  +B_{2}\mathscr{F}\left(  \gamma \right)  x\left(
t-\tau \right)  , \label{ppfclosed1}%
\end{equation}
which is a linear time-delay system whose stability is completely determined
by the right-most roots of its characteristic equation $c\left(
s,\gamma \right)  =\det \left(  sI_{4}-\left(  A+B_{1}\mathscr{F}\left(
\gamma \right)  \right)  -B_{2}\mathscr{F}\left(  \gamma \right)  \mathrm{e}%
^{-\tau s}\right)  $ \cite{hale71book,zld12auto}. Let \cite{zhou14auto}
\begin{equation}
\lambda_{\max}\left(  \gamma \right)  =\max \left \{  \operatorname{Re}\left \{
s\right \}  :c\left(  s,\gamma \right)  =0\right \}  , \label{tveq121}%
\end{equation}
which can be efficiently computed by the software package DDE-BIFTOOL
\cite{els01soft} for a fixed $\gamma$. Then the closed-loop system
(\ref{ppfclosed1}) is asymptotically stable if and only if $\lambda_{\max
}\left(  \gamma \right)  <0\ $and its convergence rate is completely determined
by $\lambda_{\max}\left(  \gamma \right)  $ \cite{hale71book,zld12auto}. Let
$\gamma_{\sup}>0$ be the minimal number such that $\lambda_{\max}\left(
\gamma \right)  =0$. Then clearly there exists$\ $an optimal value
$\gamma_{\mathrm{opt}}\in \left(  0,\gamma_{\sup}\right)  $ such that
$\lambda_{\max}\left(  \gamma \right)  $ is minimized at $\gamma=\gamma
_{\mathrm{opt}}$ with the minimal value $\lambda_{\max \min}$. It is thus clear
that $\lambda_{\max \min}$ is the maximal convergence rate that the controller
(\ref{2u}) can achieve \cite{zhou14book,zld12auto,zhou14auto}.

The functions $\lambda_{\max}\left(  \gamma \right)  $ associated with both
$\mathscr{F}\left(  \gamma \right)  =F_{\mathrm{TPF}}$ and $\mathscr{F}\left(
\gamma \right)  =F_{\mathrm{TPPF}}$ are shown in Fig. \ref{ppffig1} from which
we clearly see that

\begin{figure}[ptb]
\begin{center}
\includegraphics[scale=0.65]{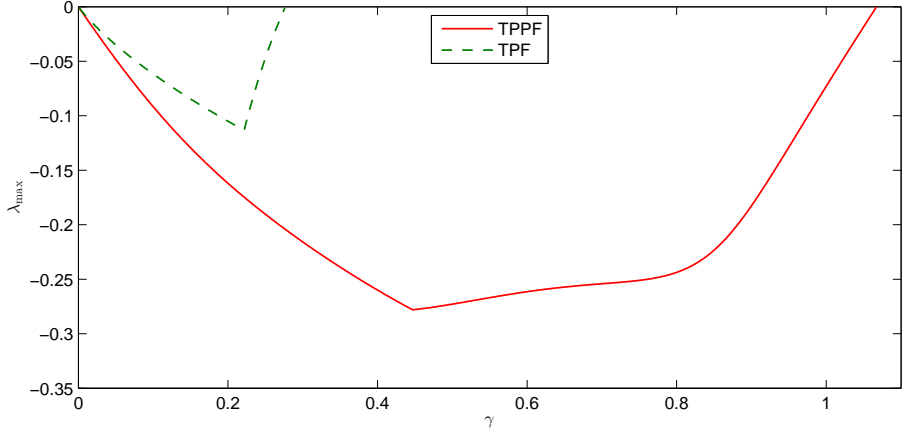}
\end{center}
\caption{The functions $\lambda_{\max}\left(  \gamma \right)  $ for the
closed-loop system (\ref{ppfclosed1}) under the TPF and the TPPF}%
\label{ppffig1}%
\end{figure}

\begin{itemize}
\item[A.] The closed-loop system under the TPF is asymptotically stable if and
only if $\gamma \in(0,\gamma_{\sup}^{\mathrm{TPF}})$ where $\gamma_{\sup
}^{\mathrm{TPF}}=0.276,$ and its convergence rate is maximized with
$\gamma=\gamma_{\mathrm{opt}}^{\mathrm{TPF}}=0.221$ and $\lambda_{\max \min
}^{\mathrm{TPF}}=\lambda_{\max}(\gamma_{\mathrm{opt}}^{\mathrm{TPF}})=-0.113,$
which is the maximal convergence rate that the TPF can achieve.

\item[B.] The closed-loop system under the TPPF is asymptotically stable if
and only if $\gamma \in(0,\gamma_{\sup}^{\mathrm{TPPF}})$ where $\gamma_{\sup
}^{\mathrm{TPPF}}=1.067$ and its convergence rate is maximized with
$\gamma=\gamma_{\mathrm{opt}}^{\mathrm{TPPF}}=0.447$ and $\lambda_{\max \min
}^{\mathrm{TPPF}}=\lambda_{\max}(\gamma_{\mathrm{opt}}^{\mathrm{TPPF}%
})=-0.278,$ which is the maximal convergence rate that the TPPF can achieve.
\end{itemize}

We thus conclude that the TPPF can not only allow a larger range of $\gamma$
ensuring the stability than the TPF (namely, $\gamma_{\sup}^{\mathrm{TPPF}%
}>\gamma_{\sup}^{\mathrm{TPF}}$) but also lead to a much better control
performance than the TPF (namely, $\lambda_{\max \min}^{\mathrm{TPPF}}%
<\lambda_{\max \min}^{\mathrm{TPF}}$). For the comparison purpose, both the
state responses of the closed-loop system by TPF with $\gamma=\gamma
_{\mathrm{opt}}^{\mathrm{TPF}}$ and the closed-loop system by the TPPF with
$\gamma=\gamma_{\mathrm{opt}}^{\mathrm{TPPF}}$ are recorded in Fig.
\ref{ppffig2} from which we can see that the later one indeed converges faster
than the former one.

\begin{figure}[ptb]
\begin{center}
\includegraphics[scale=0.65]{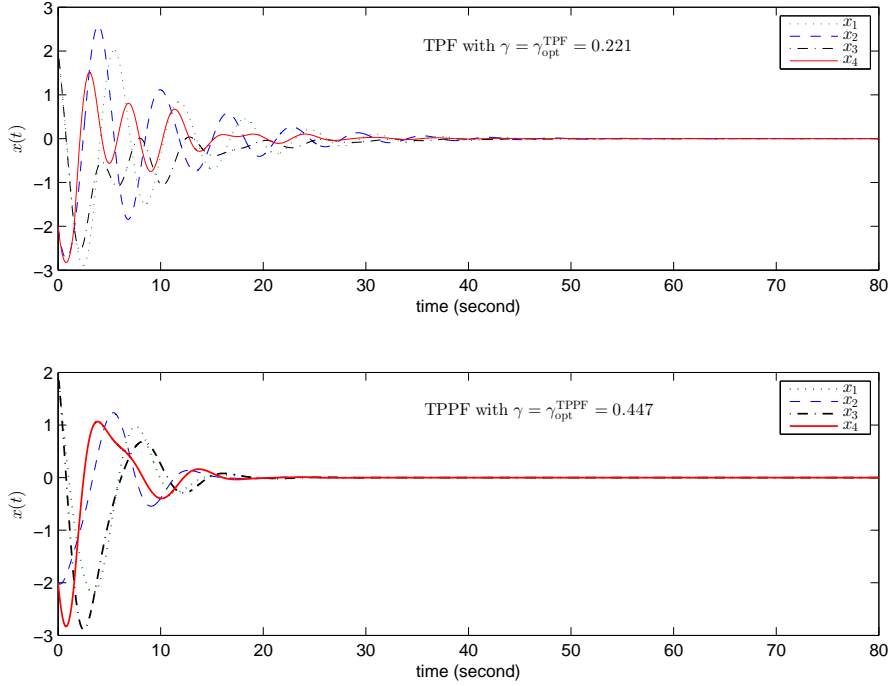}
\end{center}
\caption{The state responses of the closed-loop systems (\ref{ppfclosed1}) by
the TPF with $\gamma=\gamma_{\mathrm{opt}}^{\mathrm{TPF}}$ and by the TPPF
with $\gamma=\gamma_{\mathrm{opt}}^{\mathrm{TPPF}}$}%
\label{ppffig2}%
\end{figure}

\subsection{Consensus by Truncated Pseudo-Predictor Feedback}

We assume that there are six agents with each agent being characterized by the
linear system (\ref{sysex})--(\ref{ABB}). Let these agents be connected by a
communication network as shown in Fig. \ref{ppffig3}. This network can be
characterized by the Laplacian%
\begin{equation}
L=\left[
\begin{array}
[c]{cccccc}%
2 & -2 & 0 & 0 & 0 & 0\\
-1 & 1 & 0 & 0 & 0 & 0\\
0 & 0 & 2 & 0 & -2 & 0\\
0 & 0 & -1 & 1 & 0 & 0\\
-2 & 0 & 0 & 0 & 2 & 0\\
0 & 0 & 0 & 0 & -1 & 1
\end{array}
\right]  . \label{cssL1}%
\end{equation}
As $\lambda \left(  L\right)  =\{0,1,1,2,2,3\}$, Assumption \ref{ass3} is
fulfilled. We can thus choose $\mu=1.$ By Theorem \ref{th3}, the protocol can
then be designed as
\begin{equation}
u_{i}\left(  t\right)  =\mathscr{F}\left(  \gamma \right)  z_{i}\left(
t\right)  ,\;i\in \mathbf{I}\left[  1,6\right]  , \label{protocols}%
\end{equation}
where $\mathscr{F}\left(  \gamma \right)  $ is given by (\ref{ftppf}) and
$\gamma=0.1.$ For some specified initial conditions, the state vector $x_{1}$
of the first agent and the differences between the states $x_{i}$ of the other
five agents and $x_{1}$ are recorded in Fig. \ref{ppffig5}. From this figure,
we clearly see that the consensus is achieved.

\begin{figure}[ptb]
\begin{center}
\includegraphics[scale=0.38]{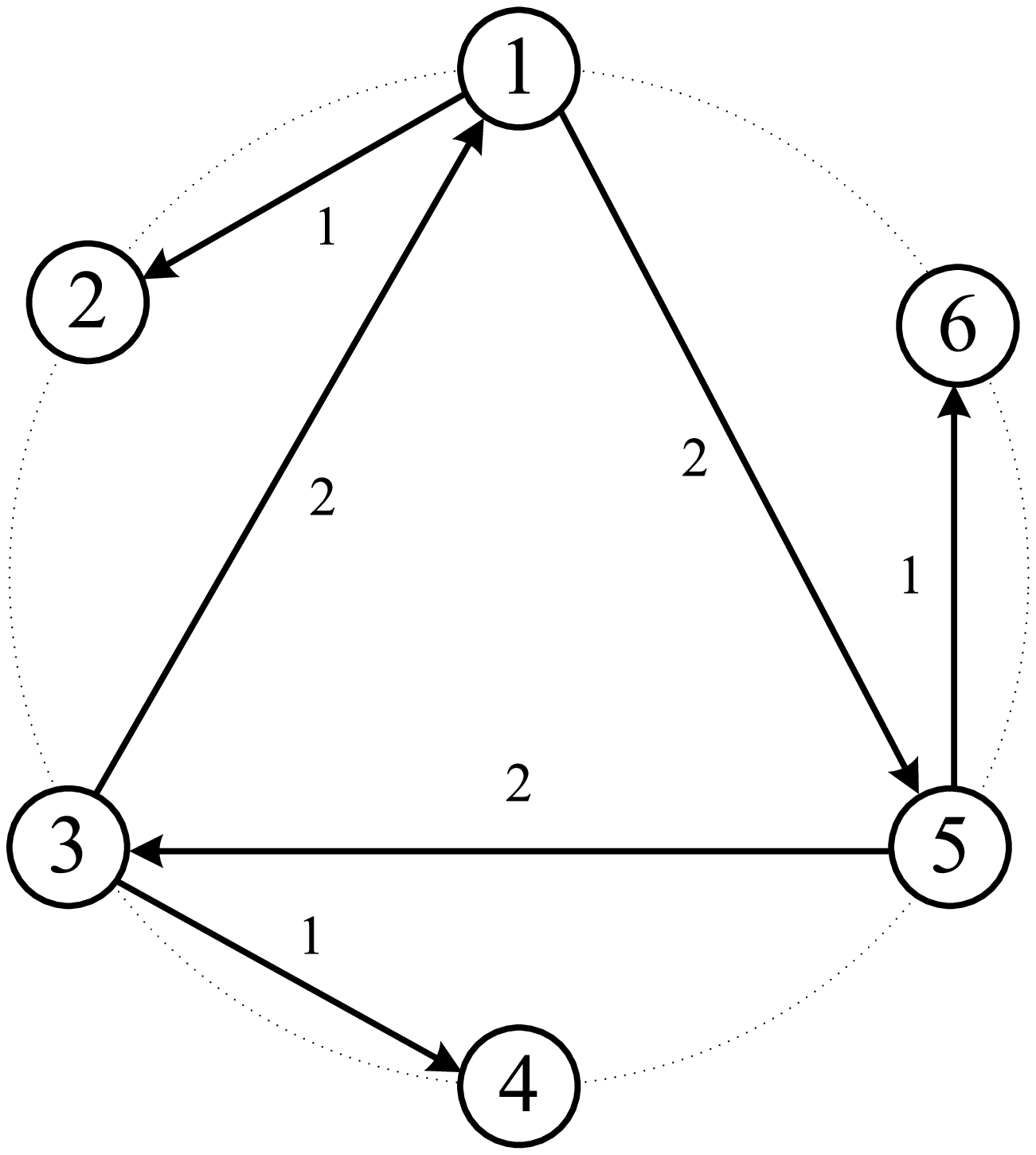}
\end{center}
\caption{The communication network}%
\label{ppffig3}%
\end{figure}

\section{\label{sec6}Conclusion}

This paper has studied the stabilization problem for linear systems with
multiple input delays. We first establish a pseudo-predictor feedback
controller, which is infinite dimensional, and then build a truncated version
of this pseudo-predictor feedback controller, which is finite dimensional and
is termed as truncated pseudo-predictor feedback (TPPF). It is shown that the
TPPF can stabilize the concerned systems for arbitrarily large yet bounded
delay if the open-loop system is not exponentially stable. Comparison of the
TPPF with the existing results shows that the former one can generally lead to
better control performance. The TPPF approach has also been utilized to solve
the consensus problem for linear multi-agent systems with multiple input
delays. \begin{figure}[ptb]
\begin{center}
\includegraphics[scale=0.65]{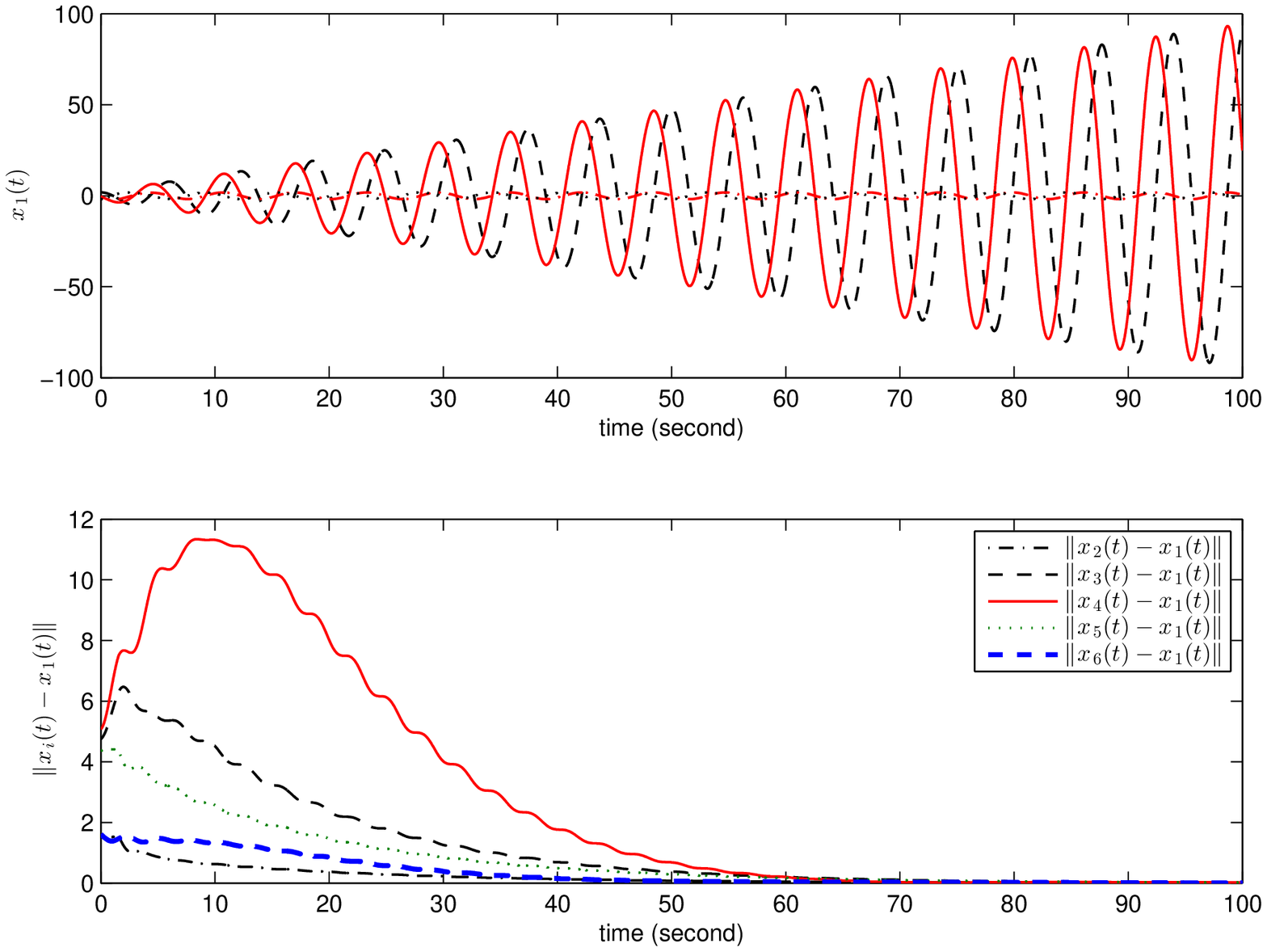}
\end{center}
\caption{The state vector $x_{1}(t)$ and $\left \Vert x_{i}(t)-x_{1}%
(t)\right \Vert ,i\in \mathbf{I}\left[  2,6\right]  $}%
\label{ppffig5}%
\end{figure}

\section*{Appendix}

\subsection*{A1. Proof of Lemma \ref{lm1}}

We compute from (\ref{y3}), (\ref{y2}), (\ref{y1}) and (\ref{yt}) that%
\begin{align}
y\left(  t\right)   &  =\mathrm{e}^{-A\tau}y_{1}\left(  t\right) \nonumber \\
&  =\mathrm{e}^{-A\tau}\left(  \mathrm{e}^{A\tau_{1}}y_{2}\left(  t\right)
+\int_{t}^{t+\tau_{1}}\mathrm{e}^{A\left(  t+\tau_{1}-s\right)  }\left(
\mathrm{e}^{A\left(  \tau_{2}+\tau_{3}\right)  }B_{1}u\left(  s-\tau
_{1}\right)  \right)  \mathrm{d}s\right) \nonumber \\
&  =\mathrm{e}^{-A\left(  \tau_{2}+\tau_{3}\right)  }y_{2}\left(  t\right)
+\int_{t}^{t+\tau_{1}}\mathrm{e}^{A\left(  t-s\right)  }B_{1}u\left(
s-\tau_{1}\right)  \mathrm{d}s\nonumber \\
&  =\mathrm{e}^{-A\left(  \tau_{2}+\tau_{3}\right)  }\left(  \mathrm{e}%
^{A\tau_{2}}y_{3}\left(  t\right)  +\int_{t}^{t+\tau_{2}}\mathrm{e}^{A\left(
t+\tau_{2}-s\right)  }\left(  \mathrm{e}^{A\tau_{3}}B_{2}u\left(  s-\tau
_{2}\right)  \right)  \mathrm{d}s\right)  +\int_{t}^{t+\tau_{1}}%
\mathrm{e}^{A\left(  t-s\right)  }B_{1}u\left(  s-\tau_{1}\right)
\mathrm{d}s\nonumber \\
&  =\mathrm{e}^{-A\tau_{3}}y_{3}\left(  t\right)  +\int_{t}^{t+\tau_{2}%
}\mathrm{e}^{A\left(  t-s\right)  }B_{2}u\left(  s-\tau_{2}\right)
\mathrm{d}s+\int_{t}^{t+\tau_{1}}\mathrm{e}^{A\left(  t-s\right)  }%
B_{1}u\left(  s-\tau_{1}\right)  \mathrm{d}s\nonumber \\
&  =\mathrm{e}^{-A\tau_{3}}\left(  \mathrm{e}^{A\tau_{3}}x\left(  t\right)
+\int_{t}^{t+\tau_{3}}\mathrm{e}^{A\left(  t+\tau_{3}-s\right)  }B_{3}u\left(
s-\tau_{3}\right)  \mathrm{d}s\right) \nonumber \\
&  \quad+\int_{t}^{t+\tau_{2}}\mathrm{e}^{A\left(  t-s\right)  }B_{2}u\left(
s-\tau_{2}\right)  \mathrm{d}s+\int_{t}^{t+\tau_{1}}\mathrm{e}^{A\left(
t-s\right)  }B_{1}u\left(  s-\tau_{1}\right)  \mathrm{d}s\nonumber \\
&  =x\left(  t\right)  +%
{\displaystyle \sum \limits_{i=1}^{3}}
\int_{t}^{t+\tau_{i}}\mathrm{e}^{A\left(  t-s\right)  }B_{i}u\left(
s-\tau_{i}\right)  \mathrm{d}s\nonumber \\
&  =x\left(  t\right)  +%
{\displaystyle \sum \limits_{i=1}^{3}}
\int_{t-\tau_{i}}^{t}\mathrm{e}^{A\left(  t-\tau_{i}-s\right)  }B_{i}u\left(
s\right)  \mathrm{d}s\nonumber \\
&  =z\left(  t\right)  , \label{eq14}%
\end{align}
which completes the proof.

\subsection*{A2. Proof of Lemma \ref{lm2}}

In view of systems (\ref{sys1}), (\ref{eq2}) and (\ref{eq3}), we can write
\begin{equation}
\left \{
\begin{array}
[c]{l}%
y_{3}\left(  t\right)  =x\left(  t+\tau_{3}\right)  -\int_{t}^{t+\tau_{3}%
}\mathrm{e}^{A\left(  t+\tau_{3}-s\right)  }\left(  B_{1}u\left(  s-\tau
_{1}\right)  +B_{2}u\left(  s-\tau_{2}\right)  \right)  \mathrm{d}s,\\
y_{2}\left(  t\right)  =y_{3}\left(  t+\tau_{2}\right)  -\int_{t}^{t+\tau_{2}%
}\mathrm{e}^{A\left(  t+\tau_{2}-s\right)  }\left(  B_{3}u\left(  s\right)
+\mathrm{e}^{A\tau_{3}}B_{1}u\left(  s-\tau_{1}\right)  \right)
\mathrm{d}s,\\
y_{1}\left(  t\right)  =y_{2}\left(  t+\tau_{1}\right)  -\int_{t}^{t+\tau_{1}%
}\mathrm{e}^{A\left(  t+\tau_{1}-s\right)  }\left(  \mathrm{e}^{A\tau_{2}%
}B_{3}+\mathrm{e}^{A\tau_{3}}B_{2}\right)  u\left(  s\right)  \mathrm{d}s,
\end{array}
\right.  \label{eq16}%
\end{equation}
from which it follows that%
\begin{align}
y\left(  t\right)   &  =\mathrm{e}^{-A\tau}y_{1}\left(  t\right) \nonumber \\
&  =\mathrm{e}^{-A\tau}y_{2}\left(  t+\tau_{1}\right)  -\int_{t}^{t+\tau_{1}%
}\mathrm{e}^{-A\tau}\mathrm{e}^{A\left(  t+\tau_{1}-s\right)  }\left(
\mathrm{e}^{A\tau_{2}}B_{3}+\mathrm{e}^{A\tau_{3}}B_{2}\right)  u\left(
s\right)  \mathrm{d}s\nonumber \\
&  =\mathrm{e}^{-A\tau}y_{3}\left(  t+\tau_{1}+\tau_{2}\right) \nonumber \\
&  \quad-\int_{t+\tau_{1}}^{t+\tau_{1}+\tau_{2}}\mathrm{e}^{-A\tau}%
\mathrm{e}^{A\left(  t+\tau_{1}+\tau_{2}-s\right)  }\left(  B_{3}u\left(
s\right)  +\mathrm{e}^{A\tau_{3}}B_{1}u\left(  s-\tau_{1}\right)  \right)
\mathrm{d}s\nonumber \\
&  \quad-\int_{t}^{t+\tau_{1}}\mathrm{e}^{-A\tau}\mathrm{e}^{A\left(
t+\tau_{1}-s\right)  }\left(  \mathrm{e}^{A\tau_{2}}B_{3}+\mathrm{e}%
^{A\tau_{3}}B_{2}\right)  u\left(  s\right)  \mathrm{d}s\nonumber \\
&  =\mathrm{e}^{-A\tau}x\left(  t+\tau_{1}+\tau_{2}+\tau_{3}\right)
\nonumber \\
&  \quad-\int_{t+\tau_{1}+\tau_{2}}^{t+\tau_{1}+\tau_{2}+\tau_{3}}%
\mathrm{e}^{-A\tau}\mathrm{e}^{A\left(  t+\tau_{1}+\tau_{2}+\tau_{3}-s\right)
}\left(  B_{1}u\left(  s-\tau_{1}\right)  +B_{2}u\left(  s-\tau_{2}\right)
\right)  \mathrm{d}s\nonumber \\
&  \quad-\int_{t+\tau_{1}}^{t+\tau_{1}+\tau_{2}}\mathrm{e}^{-A\tau}%
\mathrm{e}^{A\left(  t+\tau_{1}+\tau_{2}-s\right)  }\left(  B_{3}u\left(
s\right)  +\mathrm{e}^{A\tau_{3}}B_{1}u\left(  s-\tau_{1}\right)  \right)
\mathrm{d}s\nonumber \\
&  \quad-\int_{t}^{t+\tau_{1}}\mathrm{e}^{-A\tau}\mathrm{e}^{A\left(
t+\tau_{1}-s\right)  }\left(  \mathrm{e}^{A\tau_{2}}B_{3}+\mathrm{e}%
^{A\tau_{3}}B_{2}\right)  u\left(  s\right)  \mathrm{d}s\nonumber \\
&  =\mathrm{e}^{-A\tau}x\left(  t+\tau_{1}+\tau_{2}+\tau_{3}\right)
\nonumber \\
&  \quad-\int_{t+\tau_{1}+\tau_{2}}^{t+\tau_{1}+\tau_{2}+\tau_{3}}%
\mathrm{e}^{A\left(  t-s\right)  }B_{1}u\left(  s-\tau_{1}\right)
+\mathrm{e}^{A\left(  t-s\right)  }B_{2}u\left(  s-\tau_{2}\right)
\mathrm{d}s\nonumber \\
&  \quad-\int_{t+\tau_{1}}^{t+\tau_{1}+\tau_{2}}\mathrm{e}^{A\left(
t-\tau_{3}-s\right)  }B_{3}u\left(  s\right)  +\mathrm{e}^{A\left(
t-s\right)  }B_{1}u\left(  s-\tau_{1}\right)  \mathrm{d}s\nonumber \\
&  \quad-\int_{t}^{t+\tau_{1}}\mathrm{e}^{A\left(  t-\tau_{3}-s\right)  }%
B_{3}u\left(  s\right)  +\mathrm{e}^{A\left(  t-\tau_{2}-s\right)  }%
B_{2}u\left(  s\right)  \mathrm{d}s\nonumber \\
&  =\mathrm{e}^{-A\tau}x\left(  t+\tau_{1}+\tau_{2}+\tau_{3}\right)  -%
{\displaystyle \sum \limits_{i=1}^{3}}
\int_{t}^{t+\tau_{1}+\tau_{2}+\tau_{3}-\tau_{i}}\mathrm{e}^{A\left(
t-s-\tau_{i}\right)  }B_{i}u\left(  s\right)  \mathrm{d}s, \label{eq17}%
\end{align}
which completes the proof.

\bibliography{zb}
\bibliographystyle{unsrt}

\end{document}